\documentclass[11pt]{amsart}
\usepackage{amsfonts}
\usepackage{amssymb}
\usepackage{amsmath}
\usepackage{amsthm}
\setlength{\textwidth}{8.276in} \setlength{\textheight}{11.705in}
\addtolength{\textwidth}{-2in} \addtolength{\textheight}{-2in}
\setlength{\oddsidemargin}{0pt}
\setlength{\evensidemargin}{\oddsidemargin}
\setlength{\topmargin}{0pt} \addtolength{\topmargin}{-\headheight}
\addtolength{\topmargin}{-\headsep}
\theoremstyle{plain}
\newtheorem{thm}{Theorem}[section]
\newtheorem{lem}[thm]{Lemma}
\theoremstyle{definition}

\newtheorem{rem}[thm]{Remark}

\numberwithin{equation}{section}

\begin{document}
\title[$(p_1,p_2)$-Laplacian system]{Multiple Positive solutions of a $(p_1,p_2)$-Laplacian system with
nonlinear BCs}
\author{Filomena Cianciaruso}%
\address{Filomena Cianciaruso, Dipartimento di Matematica e Informatica, Universit\`{a} della
Calabria, 87036 Arcavacata di Rende, Cosenza, Italy}
\email{cianciaruso@unical.it}
\author{Paolamaria Pietramala}%
\address{Paolamaria Pietramala, Dipartimento di Matematica e Informatica, Universit\`{a}
della Calabria, 87036 Arcavacata di Rende, Cosenza, Italy}
\email{pietramala@unical.it} \subjclass[2010]{Primary 45G15,
secondary  34B18} \keywords{Fixed point index, cone, positive
solution, p-laplacian, system, nonlinear boundary conditions.}
\begin{abstract}
Using the theory of fixed point index, we discuss  existence, non-existence, localization and multiplicity of positive solutions for a $(p_1,p_2)$-Laplacian system with nonlinear  Robin and/or Dirichlet  type boundary conditions.
We give an example to illustrate our theory.
\end{abstract}
\maketitle
\section{Introduction}
In the remarkable paper \cite{wa} Wang proved the existence of one positive solution of following one-dimensional $p$-Laplacian equation
\begin{equation}\label{kawa}  
(\varphi_{p}(u^{\prime}))'(t)+g(t)f(u(t))=0,\ t \in (0,1), \\
\end{equation}
subject to one of the following three pair of nonlinear boundary conditions (BCs)
\begin{equation*}  
u'(0)=0\,,\,\,u(1)+B_1\left(u'(1)\right)=0,
\end{equation*}
\begin{equation*}  
u(0)=B_0\left(u'(0)\right)\,,\,\,u'(1)=0.
\end{equation*}
\begin{equation*}  
u(0)=B_0\left(u'(0)\right)\,,\,\,u(1)+B_1\left(u'(1)\right)=0.
\end{equation*}
The results of \cite{wa} were extended by Karakostas \cite{kara} to the context of deviated arguments. In both cases,
the existence results are obtained via a careful study of an associated integral operator combined with the use of  the Krasnosel'ski\u\i{}-Guo Theorem on cone compressions and cone expansions. 

The Krasnosel'ski\u\i{}-Guo Theorem, more in general, topological methods are a commonly used tool in the study of existence of
 positive solutions for the $p$-Laplacian equation \eqref{kawa} subject to different BCs. This is an active area of research, for example, homogeneous  Dirichlet  BCs have been studied in
 \cite{aga-lu-ore, bai-chen, inmapr, kim, lu-ore-zho,  sim-lee, wa-zha, ya-ore}, homogeneous Robin  BCs in \cite{lu-ore-zho, wa-zha, ya-ore}, non local BCs of Dirichlet  type  in \cite{ave-he, ba-dje-mo, bai-fang, calve,  fe-ge-jian, he-ge,  kara2, wa, wa-ge, zha}  and nonlocal BCs of Robin type in \cite{he-ge,  li-shen, ma-du,  wa-ge2, wa-ho, zha}.
 
Here we study the the one-dimensional $(p_1,p_2)$-Laplacian system
\begin{equation}  \label{lap}
\begin{array}{c}
(\varphi_{p_1}(u^{\prime}))'(t)+g_1(t)f_{1}(t, u(t),v(t))=0,\ t \in (0,1), \\
(\varphi_{p_2}(v^{\prime}))'(t)+g_2(t)f_{2}(t, u(t),v(t))=0,\ t \in (0,1), \\
\end{array}%
\end{equation}
with $\varphi_{p_i}(w)=\vert w\vert^{p_i-2}w$, subject to the nonlinear boundary conditions (BCs)
\begin{equation}  \label{bc}
u'(0)=
0\,,\,\,u(1)+B_1\left(u'(1)\right)=0,\,\,\,\,v(0)=B_2\left(v'(0)\right),\,\,v(1)=0.
\end{equation}

The existence of \emph{positive} solutions of systems of  equations of the type \eqref{lap} has been widely studied, see for example \cite{chen-lu,  la-zha, lee-lee, xu-lee}
under homogeneous Dirichlet BCs  and \cite{inmapr, je-pre, pra-ku-mu, su-we-xu,ya} with homogeneous Robin or Neumann BCs. 
For earlier contributions on problems with nonlinear BCs we refer
to \cite{Goodrich1, Goodrich2, gi-caa, gifmpp-cnsns, gipp-cant, gipp-nonlin, gipp-mmas, kara, li-shen, paola, wa} and references therein.

We improve and complement the previous results  in several
directions:  we
obtain \emph{multiplicity}  results for $(p_1, p_2)$-Laplacian \emph{system}
subject to \emph{nonlinear} BCs, we allow different growths in the nonlinearities $f_1$ and $f_2$ and we also discuss
non-existence results. Finally we  illustrate in an example that all the constants that occur in our results can be computed.

Our approach  is
to seek  solutions of the system \eqref{lap}-\eqref{bc} as fixed points  of a suitable integral operator.
We make use of the classical fixed point index theory and benefit of ideas from the papers \cite{gipp-nonlin, gipp-nodea, kara, wa}.\\

\section{The system of  integral equations}\label{sec2}
We recall that a \emph{cone} $K$ in a Banach space $X$  is a
closed convex set such that $\lambda \, x\in K$ for $x \in K$ and
$\lambda\geq 0$ and $K\cap (-K)=\{0\}$.\\
If $\Omega$ is a open bounded subset of a cone $K$ (in the relative topology) we denote by $\overline{\Omega}$ and $\partial \Omega$
the closure and the boundary relative to $K$. When $\Omega$ is an open bounded subset of $X$ we write $\Omega_K=\Omega \cap K$, an open subset of $K$.\\
The following Lemma summarizes some classical results regarding the fixed point index, for more details see~\cite{Amann-rev, guolak}.
\begin{lem}
Let $\Omega$ be an open bounded set with $0\in \Omega_{K}$ and $\overline{\Omega}_{K}\ne K$. Assume that $F:\overline{\Omega}_{K}\to K$ is
a compact map such that $x\neq Fx$ for all $x\in \partial \Omega_{K}$. Then the fixed point index $i_{K}(F, \Omega_{K})$ has the following properties.
\begin{itemize}
\item[(1)] If there exists $e\in K\setminus \{0\}$ such that $x\neq Fx+\lambda e$ for all $x\in \partial \Omega_K$ and all $\lambda
>0$, then $i_{K}(F, \Omega_{K})=0$.
\item[(2)] If  $\mu x \neq Fx$ for all $x\in \partial \Omega_K$ and for every $\mu \geq 1$, then $i_{K}(F, \Omega_{K})=1$.
\item[(3)] If $i_K(F,\Omega_K)\ne0$, then $F$ has a fixed point in $\Omega_K$.
\item[(4)] Let $\Omega^{1}$ be open in $X$ with $\overline{\Omega^{1}}\subset \Omega_K$. If $i_{K}(F, \Omega_{K})=1$ and
$i_{K}(F, \Omega_{K}^{1})=0$, then $F$ has a fixed point in $\Omega_{K}\setminus \overline{\Omega_{K}^{1}}$. The same result holds if
$i_{K}(F, \Omega_{K})=0$ and $i_{K}(F, \Omega_{K}^{1})=1$.
\end{itemize}
\end{lem}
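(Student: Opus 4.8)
The plan is to read each of the four items as a consequence of the standard axioms of the fixed point index for compact maps on a cone, namely normalization, homotopy invariance, (excision-)additivity and the solution property, as developed in \cite{Amann-rev, guolak}. Two standing facts would be used repeatedly: since $0\in\Omega_K$ lies in the relative interior, $0\notin\partial\Omega_K$; and since $\overline{\Omega}_K$ is bounded and $F$ is compact, the quantity $M:=\sup_{x\in\overline{\Omega}_K}\|x-Fx\|$ is finite.

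For item (2) I would deform $F$ to the zero map through $H(t,x)=tFx$, $t\in[0,1]$. Admissibility on $\partial\Omega_K$ is checked by cases: at $t=0$ a fixed point would be $x=0\notin\partial\Omega_K$, while for $t\in(0,1]$ a fixed point $x=tFx$ means $Fx=\mu x$ with $\mu=1/t\geq 1$, contradicting the hypothesis. Homotopy invariance then gives $i_K(F,\Omega_K)=i_K(0,\Omega_K)$, and normalization (the constant map with value $0\in\Omega_K$) evaluates this to $1$.

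For item (1) I would instead translate $F$ in the direction $e$ via $H(t,x)=Fx+te$, $t\in[0,\tau]$. The hypothesis $x\neq Fx+\lambda e$ for $\lambda>0$ covers the cases $t\in(0,\tau]$, while $x\neq Fx$ on $\partial\Omega_K$ covers $t=0$, so the homotopy is admissible on the boundary for every $\tau$. Taking any $\tau>M/\|e\|$ forces $F+\tau e$ to be fixed-point free on all of $\overline{\Omega}_K$: a fixed point would give $\tau\|e\|=\|x-Fx\|\leq M$. The excision property applied to the empty set then yields $i_K(F+\tau e,\Omega_K)=0$, and homotopy invariance carries this back to $i_K(F,\Omega_K)=0$. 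The one delicate point is the uniform choice of $\tau$, which is exactly where the compactness of $F$ on the bounded set $\overline{\Omega}_K$ enters.

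Item (3) is the solution property of the index and would simply be quoted. For item (4) I would invoke additivity. Since both indices in the hypothesis are assumed defined, $F$ has no fixed point on $\partial\Omega_K$ or on $\partial\Omega_K^1$, so additivity applies and gives
\[
i_K(F,\Omega_K)=i_K(F,\Omega_K^1)+i_K(F,\Omega_K\setminus\overline{\Omega_K^1}).
\]
In the first case this reads $1=0+i_K(F,\Omega_K\setminus\overline{\Omega_K^1})$, so the remaining index is $1$; in the second case it is $-1$. Being nonzero, item (3) then produces a fixed point in $\Omega_K\setminus\overline{\Omega_K^1}$. I expect the only real bookkeeping here to be tracking the fixed-point-freeness on the separating boundaries that legitimizes additivity, all of which is already built into the standing hypotheses.
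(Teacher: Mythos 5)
The paper offers no proof of this lemma at all --- it is stated as a summary of classical results with a pointer to \cite{Amann-rev, guolak} --- and your argument is exactly the standard derivation of the four properties from the axioms of the fixed point index (normalization, homotopy invariance, additivity/excision, and the solution property), so it is correct and matches what those sources do. The genuinely delicate points --- admissibility of the two homotopies on $\partial\Omega_K$ (using $0\notin\partial\Omega_K$ for $t=0$ in item (2)), the finiteness of $M=\sup_{x\in\overline{\Omega}_K}\|x-Fx\|$ via compactness of $F$ so that $F+\tau e$ is fixed-point free for $\tau\|e\|>M$, and the fixed-point-freeness on the separating boundaries that legitimizes additivity in item (4) --- are all identified and handled correctly.
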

To the system (\ref{lap})-(\ref{bc}) we associate the following system of integral equations, which is constructed in similar manner as in \cite{wa}, where the case of 
a single equation is studied. 
\begin{gather}
\begin{aligned}\label{syst}
&u(t)=\int_{t}^{1}\varphi_{p_1}^{-1}\Bigl(\int_{0}^{s}
g_1(\tau)f_1(\tau,u(\tau),v(\tau))\,d\tau \Bigr)
\,ds\\&\,\,\,\,\,\,\,\,\,\,\,\,\,\,\,\,\,\,\,\,\,\,\,\,\,\,\,\,\,\,\,\,+B_1\left(\varphi_{p_1}^{-1}\left(\int_{0}^{1}g_1(\tau)
f_1(\tau,u(\tau),v(\tau))\,d\tau \right)\right),\,\,\,\,\,0\le
t\le 1,\cr &v(t)=
\begin{cases} \int_{0}^{t}
\varphi_{p_2}^{-1}\left(\int_{s}^{\sigma_{u,v}}g_2(\tau)f_2(\tau,u(\tau),v(\tau))d\tau\right)\,ds\\ \,\,\,\,\,\,\,\,\,\,\,\,\,\,\,+B_2\left(\varphi_{p_2}^{-1}\left(\int_{0}^{\sigma_{u,v}}g_2(\tau)
f_2(\tau,u(\tau),v(\tau))\,d\tau\right)\right),
\,\,\,\,\,\,\,\,0\le t\le \sigma_{u,v},\cr
\int_{t}^{1}\varphi_{p_2}^{-1}\left(\int_{\sigma_{u,v}}^{s}g_2(\tau)f_2(\tau,u(\tau),v(\tau))d\tau\right)\,ds,\,\,\,\,\,\,\,\,\,\,\,\,\,\,\,\,\,\,\,\,\, \sigma_{u,v}\leq t\leq 1,\cr \end{cases} %
\end{aligned}
\end{gather}
where $\varphi_{p_i}^{-1}(w)=\vert w\vert^{\frac{1}{p_i-1}}$ sgn
$w$ and $\sigma_{u,v}$ is the smallest  solution $x\in [0,1]$ of the
equation
\begin{eqnarray*}
&&\int_{0}^{x}\varphi_{p_2}^{-1}\left(\int_{s}^{x}g_2(\tau)
f_2(\tau,u(\tau),v(\tau))d\tau\right)\,ds
+B_2\left(\varphi_{p_2}^{-1}\left(\int_{0}^{x}g_2(\tau)
f_2(\tau,u(\tau),v(\tau))\,d\tau\right)\right)\\
&&\qquad\qquad= \int_{x}^{1} \varphi_{p_2}^{-1}\left(\int_{x}^{s}
g_2(\tau)f_2(\tau,u(\tau),v(\tau))d\tau\right)\,ds.
\end{eqnarray*}
By a \emph{solution}  of (\ref{lap})-(\ref{bc}), we mean a solution of the system \eqref{syst}.

In order to utilize the fixed point index theory we
state the following assumptions on the terms that occur in the
system \eqref{syst}.
\begin{itemize}
\item [$(C1)$] For every $i=1,2$, $f_i: [0,1]\times [0,\infty)\times [0,\infty) \to [0,\infty)$ satisfies Carath\'{e}odory conditions, that is, $f_i(\cdot,u,v)$ is measurable for each fixed $(u,v)$ and $f_i(t,\cdot,\cdot)$ is continuous for almost every (a.e.) $t\in [0,1]$, and for each $r>0$ there exists $\phi_{i,r} \in L^{\infty}[0,1]$ such that{}
\begin{equation*}
f_i(t,u,v)\le \phi_{i,r}(t) \;\text{ for } \; u,v\in [0,r]\;\text{ and a.\,e.} \; t\in [0,1].
\end{equation*}%
{}
\item [$(C2)$] $g_1\in L^{1}[0,1]$, $g_1\geq 0$ and
\begin{equation*}
0<\int_{0}^{1}\varphi_{p_1}^{-1}\Bigl(\int_{0}^{s} g_1(\tau)\,d\tau \Bigr) \,ds <+\infty.
 \end{equation*}
\item [$(C3)$] $g_2\in L^{1}[0,1]$, $g_2\geq 0$ and
\begin{equation}\label{integ2}
0< \int_{0}^{1/2}\varphi_{p_2}^{-1}\left(\int_{s}^{1/2}g_2(\tau)d\tau\right)\,ds +\int_{1/2}^{1}
\varphi_{p_2}^{-1}\left(\int_{1/2}^{s} g_2(\tau)d\tau\right)\,ds<+\infty.
\end{equation}
\item [$(C4)$] For every $i=1,2$, $B_i: \mathbb R\to \mathbb R$ is a continuous
function and there exist $h_{i1}$, $h_{i2}\geq 0$ such that 
\begin{equation*} \label{B}
h_{i1}\,v\le B_i(v)\le h_{i2}\,v \mbox{ for any }v\geq 0.
\end{equation*}
\end{itemize}

\begin{rem} The condition \eqref{integ2} is weaker than the condition
\begin{equation}\label{Str}
0<\int_{0}^{1} \varphi_{p_2}^{-1}\left(\int_{s}^{1} g_2(\tau)d\tau\right)ds<+\infty.
\end{equation}
In fact, for example, the function
$$ g_2(t)=
\begin{cases}
\frac{1}{(t-1)^2},\,\,\, t\in [0,1/2],\cr
\frac{1}{t^2},\,\,\, t\in (1/2,1],\cr
\end{cases}
$$
satisfies \eqref{integ2} but not satisfies \eqref{Str}.
\end{rem}

\begin{rem}
From  $(C2)$ and $(C3)$ follow that  there exists  $[a_1,b_1]\subset [0,1)$  such that $\int_{a_1}^{b_1} g_1(s)\,ds >0$ and there exists  $[a_2,b_2]\subset (0,1)$  such that $\int_{a_2}^{b_2} g_2(s)\,ds >0$.
\end{rem}

We work in the space $C[0,1]\times C[0,1]$ endowed with the norm
\begin{equation*}
\| (u,v)\| :=\max \{\| u\| _{\infty },\| v\| _{\infty }\},
\end{equation*}%
where $\| w\| _{\infty}:=\max \{| w(t)|,t\in [0,1] \}$.

 Take the cones
\begin{equation*}
K_{1}:=\{w\in C[0,1]: w\geq 0, \mbox { concave and nonincreasing}\},
\end{equation*}%
\begin{equation*}
K_{2}:=\{w\in C[0,1]:w\geq0, \mbox { concave}\}.
\end{equation*}%
It is known (see e.g. \cite{wa}) that
\begin{itemize}
\item for $w\in K_1$ we have $w(t)\geq (1-t)\|w\|_\infty$, for $t\in [0,1]$;
\item  for $w\in K_2$ we have $w(t)\geq \min \{t,1-t\}\|w\|_\infty$, for $t\in [0,1]$.
\end{itemize}
 It follows that the functions in $K_i$ are strictly positive on the sub-interval $[a_i,b_i]$ and in particular we have
\begin{itemize}
\item for $w\in K_1$ we have $\displaystyle\min_{t\in [0,b_1]}w(t)\geq (1-b_1)\|w\|_\infty$;
\item  for $w\in K_2$ we have $\displaystyle\min_{t\in [a_2,b_2]}w(t)\geq \min \{a_2,1-b_2\}\|w\|_\infty$.
\end{itemize}
In the following we make use of the notations:
\begin{equation*}
      c_1:=1-b_1,\,\,\,\,\,\,c_2:=\min \{a_2,1-b_2\}.
 \end{equation*}
Consider now the cone $K$ in $C[0,1]\times C[0,1]$ defined by
\begin{equation*}
K:=\{(u,v)\in K_1\times K_2\}.%
\end{equation*}
For a \emph{positive} solution of the system \eqref{syst} we mean a solution $(u,v)\in K$ of \eqref{syst} such that $\|(u,v)\|> 0$. We seek such solution  as a fixed point of the following operator $T$.\\

Consider the integral operator
\begin{gather}
\begin{aligned}    \label{opT}
T(u,v)(t):=&\left(
\begin{array}{c}
T_1(u,v)(t) \\
T_2(u,v)(t)%
\end{array}
\right),
\end{aligned}
\end{gather}
where
$$
T_1(u,v)(t):=\int_{t}^{1}\varphi_{p_1}^{-1}\Bigl(\int_{0}^{s}g_1(\tau)
f_1(\tau,u(\tau),v(\tau))\,d\tau \Bigr)
\,ds+B_1\left(\varphi_{p_1}^{-1}\big(\int_{0}^{1}g_1(\tau)
f_1(\tau,u(\tau),v(\tau))\,d\tau \big)\right)
$$
and
$$
T_2(u,v)(t):=
\begin{cases} \int_{0}^{t}
\varphi_{p_2}^{-1}\left(\int_{s}^{\sigma_{u,v}}g_2(\tau)f_2(\tau,u(\tau),v(\tau))d\tau\right)\,ds\\ \,\,\,\,\,\,\,\,\,\,\,\,\,\,\,+B_2\left(\varphi_{p_2}^{-1}\left(\int_{0}^{\sigma_{u,v}}g_2(\tau)
f_2(\tau,u(\tau),v(\tau))\,d\tau\right)\right),
\,\,\,\,\,\,\,\,0\le t\le \sigma_{u,v},\cr
\int_{t}^{1}\varphi_{p_2}^{-1}\left(\int_{\sigma_{u,v}}^{s}g_2(\tau)f_2(\tau,u(\tau),v(\tau))d\tau\right)\,ds,\,\,\,\,\,\,\,\,\,\,\,\,\,\,\,\,\,\,\,\,\, \sigma_{u,v}\leq t\leq 1,\cr \end{cases} %
$$
From the definitions,  for every $(u,v)\in K$ we have
$$
\max_{t\in [0,1]}T_2(u,v)(t)=T_2(u,v)(\sigma_{u,v}).
$$
Under our assumptions, we can show that the integral operator $T$ leaves the cone $K$ invariant and is compact.

\begin{lem}\label{compact}
The operator \eqref{opT} maps $K$ into $K$ and is compact.
\end{lem}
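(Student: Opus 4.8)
The plan is to verify the two claims of Lemma~\ref{compact} separately: first that $T(K)\subseteq K$, and then that $T$ is compact. For the invariance, I would fix $(u,v)\in K$ and show that $T_1(u,v)\in K_1$ and $T_2(u,v)\in K_2$. Observe that by $(C1)$ the integrands $g_i(\tau)f_i(\tau,u(\tau),v(\tau))$ are nonnegative and integrable, so the inner integrals are nonnegative; since $\varphi_{p_i}^{-1}$ preserves sign and nonnegativity, and $B_i$ maps $[0,\infty)$ into $[0,\infty)$ by the lower bound $h_{i1}v\le B_i(v)$ in $(C4)$, both $T_1(u,v)$ and $T_2(u,v)$ are nonnegative. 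For $T_1$: differentiating the outer integral $\int_t^1(\cdots)\,ds$ in $t$ shows $T_1(u,v)$ is nonincreasing (the $B_1$ term is a constant in $t$), and a second differentiation, using that $s\mapsto\int_0^s g_1 f_1\,d\tau$ is nondecreasing and $\varphi_{p_1}^{-1}$ is increasing, gives concavity. For $T_2$: the defining equation for $\sigma_{u,v}$ is exactly the continuity-matching condition at $t=\sigma_{u,v}$, and one checks that the piecewise formula is concave on $[0,\sigma_{u,v}]$ and on $[\sigma_{u,v},1]$ with matching values and the correct behavior of the one-sided derivatives at $\sigma_{u,v}$, so that $T_2(u,v)\in K_2$; the identity $\max_t T_2(u,v)(t)=T_2(u,v)(\sigma_{u,v})$ recorded just before the lemma confirms the peak occurs at $\sigma_{u,v}$, consistent with concavity.

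The most delicate point in the invariance argument is the well-definedness and the regularity of $\sigma_{u,v}$. I would argue that the two sides of the defining equation, viewed as functions of $x\in[0,1]$, are respectively nondecreasing and nonincreasing (the left side starts at $0$ when $x=0$ up to the $B_2$ contribution, the right side vanishes at $x=1$), so by continuity they cross, giving existence of a solution; taking the smallest such $x$ defines $\sigma_{u,v}$ unambiguously. I would then invoke $(C3)$ to guarantee the relevant integrals are finite so that both sides are genuinely continuous in $x$. This is the step I expect to be the main obstacle, because establishing that $\sigma_{u,v}$ depends continuously on $(u,v)$—needed for compactness—requires care: one must rule out jumps in the smallest crossing point, typically by combining monotonicity of the two sides with a uniform transversality or strict-crossing argument coming from the nondegeneracy in Remark following $(C3)$.

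For compactness, the strategy is the standard Arzel\`a--Ascoli route. First I would establish uniform boundedness: for $(u,v)$ in a bounded subset of $K$, say $\|(u,v)\|\le r$, the Carath\'eodory bound $f_i\le\phi_{i,r}$ from $(C1)$ together with $(C2)$--$(C3)$ bounds the inner integrals uniformly, hence bounds $\varphi_{p_i}^{-1}$ of them, hence bounds $T_1(u,v)$ and $T_2(u,v)$ in sup norm, using also the linear growth bound $B_i(v)\le h_{i2}v$ from $(C4)$. Next I would prove equicontinuity: the outer integrands of $T_1$ and $T_2$ are uniformly bounded on the bounded set, so the maps $t\mapsto\int_t^1(\cdots)\,ds$ and $t\mapsto\int_0^t(\cdots)\,ds$ are uniformly Lipschitz in $t$ with a constant independent of $(u,v)$; the only subtlety is near the junction $t=\sigma_{u,v}$, handled by the continuity and uniform boundedness established above. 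By Arzel\`a--Ascoli, $T$ maps bounded sets into relatively compact sets. Finally, continuity of $T$ follows from the dominated convergence theorem applied along sequences, using the Carath\'eodory continuity of $f_i$, the continuity of $\varphi_{p_i}^{-1}$ and $B_i$, and the continuous dependence of $\sigma_{u,v}$ on $(u,v)$ proved in the previous step; combining continuity with the relative compactness of images of bounded sets yields that $T$ is compact.
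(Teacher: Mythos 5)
Your plan is correct and, for the compactness part, follows essentially the same route as the paper: uniform boundedness of $T_i$ on bounded sets via the Carath\'eodory bounds $\phi_{i,r}$ and $(C4)$, a Lipschitz-type equicontinuity estimate, and the Ascoli--Arzel\`a theorem. In fact you prove more than the paper does, which simply asserts $T(u,v)\in K$ without verification and never addresses the continuity of $T$ or the well-definedness and (possible lack of) continuous dependence of $\sigma_{u,v}$; your explicit treatment of these points --- in particular the observation that near a non-strict crossing one must argue that $T_2(u,v)$, rather than $\sigma_{u,v}$ itself, varies continuously --- fills genuine gaps in the published argument.
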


\begin{proof}
Take $(u,v)\in K$. Then  we have  $T(u,v)\in K$. Now, we show that the map $T$ is compact.
Firstly, we show that $T$ sends bounded sets into bounded sets. Take $(u,v)\in K$ such that $\| (u,v)\| \leq r$.
Then, for all $t \in [0,1]$  we have
\begin{align*}
T_1(u,v)(t)&= \int_{t}^{1} \varphi_{p_1}^{-1}\left(\int_{0}^{s}
g_1(\tau)f_1(\tau,u(\tau),v(\tau))d\tau\right)ds+ B_1\left(\varphi_{p_1}^{-1}\left(\int_{0}^{1}g_1(\tau)
f_1(\tau,u(\tau),v(\tau))d\tau \right)\right)\\\
&\leq \int_{t}^{1} \varphi_{p_1}^{-1}\left(\int_{0}^{s}
g_1(\tau)\phi_{1,r}(\tau)d\tau\right)ds+h_{12}\varphi_{p_1}^{-1}\left(\int_{0}^{1}g_1(\tau)
f_1(\tau,u(\tau),v(\tau))d\tau
\right) \\
&\leq \int_{t}^{1} \varphi_{p_1}^{-1}\left(\int_{0}^{1}
g_1(\tau)\phi_{1,r}(\tau)d\tau\right)ds+h_{12}\varphi_{p_1}^{-1}\left(\int_{0}^{1}g_1(\tau)
\phi_{1,r}(\tau)d\tau \right)\\
&\leq \int_{0}^{1} \varphi_{p_1}^{-1}\left(\int_{0}^{1}
g_1(\tau)\phi_{1,r}(\tau)d\tau\right)ds+h_{12}\varphi_{p_1}^{-1}\left(\int_{0}^{1}g_1(\tau)
\phi_{1,r}(\tau)d\tau \right)<+\infty.
\end{align*}
We prove now that $T_1$ sends bounded sets into equicontinuous sets. Let $t_1,t_2\in [0,1]$, $t_1<t_2$, $(u,v)\in K$ such that $\| (u,v)\| \leq r$. Then we have
\begin{align*}
|T_1(u,v)(t_1)-T_1(u,v)(t_2)|=&\Bigg|\int_{t_1}^{t_2}\varphi_{p_1}^{-1}\left(\int_{0}^{s} g_1(\tau)f_1(\tau,u(\tau),v(\tau))d\tau\right)\,ds\Bigg|\\
\leq &\Bigg|\int_{t_1}^{t_2}\varphi_{p_1}^{-1}\left(\int_{0}^{1} g_1(\tau)\phi_{1,r}(\tau)d\tau\right)\,ds\Bigg|=C_r|t_1-t_2|.
\end{align*}
Therefore we obtain $|T_1(u,v)(t_1)-T_1(u,v)(t_2)|\rightarrow 0$ when $t_1\rightarrow t_2$. By the Ascoli-Arzel\`{a} Theorem we can conclude that $T_1$ is a compact map. In a similar manner we proceed for $T_{2}(u,v)$.\newline
Moreover, the map $T$ is compact since the components $T_{i}$ are compact maps.
\end{proof}

\section{Existence results}

For our index calculations we use the following (relative) open bounded sets in $K$:
\begin{equation*}
K_{\rho_1,\rho_2} = \{ (u,v) \in K : \|u\|_{\infty}< \rho_1\ \text{and}\ \|v\|_{\infty}< \rho_2\}
\end{equation*}
and
\begin{equation*}
V_{\rho_1,\rho_2} =\{(u,v) \in K: \min_{t\in [a_1,b_1]}u(t)<c_1\rho_1\\ \text{  and}\ \min_{t\in [a_2,b_2]}v(t)<c_2\rho_2\}
\end{equation*}
and if $\rho_1=\rho_2=\rho$ we write simply $K_{\rho}$ and $V_{\rho}$.
 The set $V_\rho$ was introduced in~\cite{df-gi-do} as an extension to the case of systems of a set given by Lan~\cite{lan}.
 The use of different radii, in the spirit of the paper~\cite{gipp-nodea}, allows more freedom in the growth of the nonlinearities. \\

 The following Lemma is similar to the  Lemma $5$ of \cite{df-gi-do}  and therefore its proof is omitted.

\begin{lem}  \label{esca}
  The sets defined above have the following properties:
\begin{itemize}
\item $K_{c_1\rho_1,c_2\rho_2}\subset V_{\rho_1,\rho_2}\subset K_{\rho_1,\rho_2}$.
\item $(w_1,w_2) \in \partial V_{\rho_1,\rho_2}$ \; iff \; $(w_1,w_2)\in K$ and $\displaystyle \min_{t\in [a_i,b_i]} w_i(t)= c_i\rho_i$ for some
$i\in \{1,2\}$ and $\displaystyle \min_{t\in [a_j,b_j]}w_j(t) \le c_j\rho_j$ for $j\neq i$.
\item If $(w_1,w_2) \in \partial V_{\rho_1,\rho_2}$, then for some $i\in\{1,2\}$ $c_i\rho_i \le w_i(t) \le \rho_i$ for each $t \in [a_i,b_i]$
and $\|w_i\|_\infty \leq \rho_i$; moreover for $j\neq i$ we have  $\|w_j\|_\infty \leq \rho_j$.
\end{itemize}
\end{lem}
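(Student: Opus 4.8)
The plan is to reduce everything to the two functionals
$$\psi_1(u):=\min_{t\in[a_1,b_1]}u(t),\qquad \psi_2(v):=\min_{t\in[a_2,b_2]}v(t),$$
defined on $K$, in terms of which $V_{\rho_1,\rho_2}=\{(u,v)\in K:\psi_1(u)<c_1\rho_1\text{ and }\psi_2(v)<c_2\rho_2\}$. Each $\psi_i$ is $1$-Lipschitz with respect to the sup norm, hence continuous, so $V_{\rho_1,\rho_2}$ is open in $K$, being the intersection of the two open sets cut out by the strict inequalities $\psi_1(u)<c_1\rho_1$ and $\psi_2(v)<c_2\rho_2$. Throughout I would use the cone estimates recalled above, namely $\psi_i(w_i)\ge c_i\|w_i\|_\infty$ for $w_1\in K_1$ and $w_2\in K_2$, together with $c_1,c_2>0$ (which hold since $b_1<1$ and $0<a_2\le b_2<1$).

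For the first statement I would chase inequalities directly. For $K_{c_1\rho_1,c_2\rho_2}\subset V_{\rho_1,\rho_2}$, note that $\psi_i(w_i)\le\|w_i\|_\infty$, so $\|u\|_\infty<c_1\rho_1$ and $\|v\|_\infty<c_2\rho_2$ immediately give $\psi_1(u)<c_1\rho_1$ and $\psi_2(v)<c_2\rho_2$. For $V_{\rho_1,\rho_2}\subset K_{\rho_1,\rho_2}$ I would use the cone estimate in the other direction: $c_1\|u\|_\infty\le\psi_1(u)<c_1\rho_1$ forces $\|u\|_\infty<\rho_1$ after dividing by $c_1>0$, and symmetrically $\|v\|_\infty<\rho_2$.

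The heart of the argument is the second statement, for which I would first identify the closure
$$\overline{V_{\rho_1,\rho_2}}=\{(u,v)\in K:\psi_1(u)\le c_1\rho_1\text{ and }\psi_2(v)\le c_2\rho_2\}.$$
The inclusion $\subseteq$ follows from continuity of the $\psi_i$. For the reverse inclusion, which I expect to be the only real obstacle, I would use a scaling argument: given $(u,v)\in K$ with $\psi_1(u)\le c_1\rho_1$ and $\psi_2(v)\le c_2\rho_2$, the points $(su,sv)$ lie in $K$ for $s\in(0,1)$ since $K$ is a cone, and they satisfy $\psi_i(sw_i)=s\psi_i(w_i)\le sc_i\rho_i<c_i\rho_i$ because $c_i\rho_i>0$; hence $(su,sv)\in V_{\rho_1,\rho_2}$ and $(su,sv)\to(u,v)$ as $s\to1^-$. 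Since $V_{\rho_1,\rho_2}$ is open, $\partial V_{\rho_1,\rho_2}=\overline{V_{\rho_1,\rho_2}}\setminus V_{\rho_1,\rho_2}$, and intersecting the closure description with the complement of $V_{\rho_1,\rho_2}$ (the set where $\psi_1\ge c_1\rho_1$ or $\psi_2\ge c_2\rho_2$) leaves exactly those $(w_1,w_2)$ with $\psi_i(w_i)=c_i\rho_i$ for some $i$ and $\psi_j(w_j)\le c_j\rho_j$ for $j\ne i$, which is the claimed characterization.

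The third statement then reads off from the second. If $\psi_i(w_i)=c_i\rho_i$, then $w_i(t)\ge c_i\rho_i$ on $[a_i,b_i]$ by the definition of the minimum, while the cone estimate gives $c_i\|w_i\|_\infty\le\psi_i(w_i)=c_i\rho_i$, so $\|w_i\|_\infty\le\rho_i$ and therefore $w_i(t)\le\rho_i$; together these yield $c_i\rho_i\le w_i(t)\le\rho_i$ on $[a_i,b_i]$. For $j\ne i$, the inequality $\psi_j(w_j)\le c_j\rho_j$ combined with $c_j\|w_j\|_\infty\le\psi_j(w_j)$ gives $\|w_j\|_\infty\le\rho_j$, completing the argument.
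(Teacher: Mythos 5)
The paper does not actually prove this lemma --- it omits the proof, referring the reader to Lemma 5 of the cited work of Franco, Infante and O'Regan --- so there is no in-paper argument to compare against. Your proof is correct and complete: the reduction to the functionals $\psi_i$, the use of the cone estimates $c_i\|w_i\|_\infty\le\psi_i(w_i)\le\|w_i\|_\infty$ (valid since $a_1=0$, $b_1<1$ and $0<a_2\le b_2<1$, so $c_1,c_2>0$), and in particular the scaling argument $(su,sv)\to(u,v)$ identifying the relative closure of $V_{\rho_1,\rho_2}$ --- the one genuinely nontrivial step --- are exactly what is needed, and this matches the standard argument in the reference the authors invoke.
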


We firstly prove that the fixed point index is $1$ on the set
$K_{\rho_1,\rho_2}\,$.

\begin{lem}\label{ind1b}
Assume that\ \\
$(\mathrm{I}_{\rho_1,\rho_2 }^{1})$   there exist $\rho_1,\rho_2 >0$ such that for every $i=1,2$
\begin{equation}\label{eqmestt}
 f_i^{\rho_1,\rho_2}  < \varphi_{p_i}(m_i)
\end{equation}{}
where
$$f_{i}^{\rho_1,\rho_2}=\sup \Bigl\{\frac{f_{i}(t,u,v)}{\rho_i^{p_i-1}}:\;(t,u,v)\in \lbrack 0,1]\times [ 0,\rho_1 ]\times [0,\rho_2 ],\Bigr\},$$
$$\frac{1}{m_1}=\int_{0}^{1}\varphi_{p_1}^{-1}\left(\int_0^s
g_{1}(\tau)d\tau\right)\,ds+h_{12}\varphi_{p_1}^{-1}\left(\int_{0}^{1}g_1(\tau)d\tau\right),$$
$$\frac{1}{m_2}=
\max\left\{\int_{0}^{\frac{1}{2}}\varphi_{p_2}^{-1}\left(\int_{s}^{\frac{1}{2}}g_2(\tau)d\tau\right)ds+h_{22}\varphi_{p_2}^{-1}\left(\int_{0}^{\frac{1}{2}}g_2(\tau)d\tau\right),\int_{\frac{1}{2}}^{1}\varphi_{p_2}^{-1}\left(\int_{\frac{1}{2}}^{s}g_2(\tau)d\tau\right)ds\,\right\}.$$

Then $i_{K}(T,K_{\rho_1,\rho_2})=1$.
\end{lem}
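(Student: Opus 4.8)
The plan is to invoke part (2) of the fixed point index lemma stated in Section~\ref{sec2}: to conclude $i_{K}(T,K_{\rho_1,\rho_2})=1$ it suffices to verify that $\mu(u,v)\neq T(u,v)$ for every $(u,v)\in\partial K_{\rho_1,\rho_2}$ and every $\mu\geq 1$. So I would argue by contradiction, supposing there exist $(u,v)\in\partial K_{\rho_1,\rho_2}$ and $\mu\geq 1$ with $\mu u=T_1(u,v)$ and $\mu v=T_2(u,v)$. By Lemma~\ref{esca} (or directly from the definition of the box), on this boundary we have $\|u\|_\infty\leq\rho_1$ and $\|v\|_\infty\leq\rho_2$ with equality in at least one coordinate, and in particular the pointwise bounds $0\le u(\tau)\le\rho_1$, $0\le v(\tau)\le\rho_2$ hold for all $\tau$. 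These yield $f_i(\tau,u(\tau),v(\tau))\le f_i^{\rho_1,\rho_2}\,\rho_i^{\,p_i-1}$ for a.e.\ $\tau$, which is the estimate that feeds the whole argument.

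Suppose first $\|u\|_\infty=\rho_1$. Since $T_1(u,v)$ is nonincreasing in $t$, its sup-norm is $T_1(u,v)(0)$. I would bound $T_1(u,v)(0)$ by substituting $f_1\le f_1^{\rho_1,\rho_2}\rho_1^{p_1-1}$, then exploiting the positive homogeneity $\varphi_{p_1}^{-1}(c\,x)=c^{1/(p_1-1)}\varphi_{p_1}^{-1}(x)$ (valid for $c\ge0$) to factor out $(f_1^{\rho_1,\rho_2})^{1/(p_1-1)}\rho_1$, and controlling the boundary term through $B_1(w)\le h_{12}w$ from $(C4)$. This produces $T_1(u,v)(0)\le (f_1^{\rho_1,\rho_2})^{1/(p_1-1)}\rho_1/m_1$, so that the strict inequality $(f_1^{\rho_1,\rho_2})^{1/(p_1-1)}<m_1$ coming from $(\mathrm{I}_{\rho_1,\rho_2}^{1})$ gives $\|T_1(u,v)\|_\infty<\rho_1$. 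Then $\mu\rho_1=\mu\|u\|_\infty=\|T_1(u,v)\|_\infty<\rho_1$ forces $\mu<1$, contradicting $\mu\ge1$.

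The analogous but subtler case is $\|v\|_\infty=\rho_2$, where $\|T_2(u,v)\|_\infty=T_2(u,v)(\sigma_{u,v})$ is attained at the free matching point $\sigma_{u,v}$. Here I would split according to whether $\sigma_{u,v}\le 1/2$ or $\sigma_{u,v}\ge 1/2$ and use the two (equal) representations of $T_2(u,v)(\sigma_{u,v})$. If $\sigma_{u,v}\le 1/2$, I take the ``left'' representation: since $g_2f_2\ge 0$, enlarging both the inner and outer domains of integration from $\sigma_{u,v}$ up to $1/2$ only increases the value, and $B_2(w)\le h_{22}w$ controls the boundary term, so $T_2(u,v)(\sigma_{u,v})$ is bounded by the first expression in the definition of $1/m_2$. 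If $\sigma_{u,v}\ge 1/2$, I take the ``right'' representation and the same monotone-enlargement argument bounds it by the second expression in $1/m_2$. In either case the homogeneity of $\varphi_{p_2}^{-1}$ gives $T_2(u,v)(\sigma_{u,v})\le (f_2^{\rho_1,\rho_2})^{1/(p_2-1)}\rho_2/m_2<\rho_2$, whence $\mu<1$ as before, a contradiction. Since one of the two cases must occur, the hypothesis $\mu(u,v)=T(u,v)$ with $\mu\ge1$ is impossible, and property (2) yields $i_{K}(T,K_{\rho_1,\rho_2})=1$.

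I expect the control of $\sigma_{u,v}$ to be the main obstacle: one must check that the maximum of the second component is genuinely dominated by a quantity anchored at $1/2$, irrespective of where the matching point $\sigma_{u,v}$ actually lies, and this is precisely the reason $m_2$ is defined as the maximum of two expressions based at $1/2$. Once the case split is set up, the remaining estimates are routine applications of the monotonicity and homogeneity of $\varphi_{p_i}^{-1}$ together with the linear two-sided bounds on $B_i$.
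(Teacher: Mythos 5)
Your proposal is correct and follows essentially the same route as the paper's proof: a contradiction argument via property (2) of the index lemma, with the case $\|u\|_\infty=\rho_1$ handled by evaluating at $t=0$ and the case $\|v\|_\infty=\rho_2$ handled by splitting on $\sigma_{u,v}\le 1/2$ versus $\sigma_{u,v}>1/2$ and enlarging the integration domains to the ones anchored at $1/2$ that define $1/m_2$. No gaps.
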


\begin{proof}
We show that $\lambda (u,v)\neq T(u,v)$ for every $(u,v)\in \partial K_{\rho_1,\rho_2 }$ and for every $\lambda \geq 1$; this ensures that the index is 1 on $K_{\rho_1,\rho_2 }$. In fact, if this does not happen, there exist $\lambda \geq 1$ and $(u,v)\in \partial K_{\rho_1,\rho_2 }$ such that $\lambda (u,v)=T(u,v)$.\\
Firstly we assume  that $\| u\| _{\infty }=\rho_1 $ and $\| v\| _{\infty}\leq \rho_2 $. \\
Then we have
\begin{align*}
\lambda u(t)&= \int_{t}^{1} \varphi_{p_1}^{-1}\left(\int_{0}^{s}
g_1(\tau)f_1(\tau,u(\tau),v(\tau))d\tau\right)ds+B_1\left(\varphi_{p_1}^{-1}\left(\int_{0}^{1}g_1(\tau)
f_1(\tau,u(\tau),v(\tau))d\tau
\right)\right)\\
&\le \int_{t}^{1} \varphi_{p_1}^{-1}\left(\int_{0}^{s}
g_1(\tau)f_1(\tau,u(\tau),v(\tau))d\tau\right)ds+h_{12}\varphi_{p_1}^{-1}\left(\int_{0}^{1}g_1(\tau)
f_1(\tau,u(\tau),v(\tau))d\tau
\right)\\
&=\rho_1  \left(\int_{t}^{1} \varphi_{p_1}^{-1}(\int_{0}^{s}
g_1(\tau)\frac{f_1(\tau,u(\tau),v(\tau))}{\rho^{p_1-1}_1}d\tau)ds+h_{12}\varphi_{p_1}^{-1}(\int_{0}^{1}g_1(\tau)
\frac{f_1(\tau,u(\tau),v(\tau))}{\rho^{p_1-1}_1}d\tau
)\right).\\
\end{align*}
Taking  $t=0$ gives
\begin{eqnarray*}
\lambda u(0)&=&\lambda {\rho_1}\leq \rho_1  \left(\int_{0}^{1}
\varphi_{p_1}^{-1}\left(\int_{0}^{s}
g_1(\tau)f_1^{\rho_1,\rho_2}d\tau\right)\,ds+
h_{12}\varphi_{p_1}^{-1}\left(\int_{0}^{1}g_1(\tau)f_1^{\rho_1,\rho_2}d\tau\right)\right)\\
&=&  \rho_1
\varphi_{p_1}^{-1}(f_1^{\rho_1,\rho_2})\left(\int_{0}^{1}
\varphi_{p_1}^{-1}\left( \int_{0}^{s}
g_1(\tau)d\tau\right)\,ds+h_{12}\varphi_{p_1}^{-1}\left(\int_{0}^{1}g_1(\tau)d\tau\right)\right)\\
 &=&{\rho_1}\frac{1}{m_1} \varphi_{p_1}^{-1}\left( f_1^{\rho_1,\rho_2}\right).
\end{eqnarray*}
Using the hypothesis \eqref{eqmestt} and the strictly monotonicity of $\varphi_{p_1}^{-1}$ we obtain $\lambda \rho_1 <\rho_1 .$ This
contradicts the fact that $\lambda \geq 1$ and proves the result.\\
Now we assume  $\|v\|_{\infty}=\rho_2$ and $\| u\| _{\infty}\leq \rho_1 $.\\
 Then we have
 $$
 \lambda \rho_2= \|T_2(u,v)\|_\infty=T_2(u,v)(\sigma_{u,v}).
 $$
 If $\sigma_{u,v}\le \displaystyle{\frac{1}{2}}$, we have
\begin{align*}
&\lambda \rho_2= \|T_2(u,v)\|_\infty=T_2(u,v)(\sigma_{u,v})\\&=
\int_{0}^{\sigma_{u,v}}\varphi_{p_2}^{-1}\left(\int_{s}^{\sigma_{u,v}}g_2(\tau)f_2(\tau,u(\tau),v(\tau))d\tau\right)ds
+B_2\left(\varphi_{p_2}^{-1}\left(\int_{0}^{\sigma_{u,v}}g_2(\tau)f_2(\tau,u(\tau),v(\tau))d\tau\right)\right) \\
&\leq \int_{0}^{\frac{1}{2}}
\varphi_{p_2}^{-1}\left(\int_{s}^{\frac{1}{2}}
g_2(\tau)f_2(\tau,u(\tau),v(\tau))d\tau\right)ds+h_{22}\,\varphi_{p_2}^{-1}\left(\int_{0}^{\sigma_{u,v}}g_2(\tau)f_2(\tau,u(\tau),v(\tau))d\tau\right)\\
&\leq \int_{0}^{\frac{1}{2}}
\varphi_{p_2}^{-1}\left(\int_{s}^{\frac{1}{2}}
g_2(\tau)f_2(\tau,u(\tau),v(\tau))d\tau\right)ds+h_{22}\,\varphi_{p_2}^{-1}\left(\int_{0}^{\frac{1}{2}}g_2(\tau)f_2(\tau,u(\tau),v(\tau))d\tau\right)\\
&=\rho_2 \int_{0}^{\frac{1}{2}}
\varphi_{p_2}^{-1}\left(\int_{s}^{\frac{1}{2}}g_2(\tau)\frac{f_2(\tau,u(\tau),v(\tau))}{\rho_2^{p_2-1}}d\tau\right)ds+
h_{22}\,\varphi_{p_2}^{-1}\left(\int_{0}^{\frac{1}{2}}g_2(\tau)\frac{f_2(\tau,u(\tau),v(\tau))}{\rho_2^{p_2-1}}d\tau\right)\\
&\leq \rho_2
\varphi_{p_2}^{-1}(f_2^{\rho_1,\rho_2})\int_{0}^{\frac{1}{2}}\varphi_{p_2}^{-1}\left(\int_{s}^{\frac{1}{2}}g_2(\tau)d\tau\right)ds+
h_{22}\,\varphi_{p_2}^{-1}\left(\int_{0}^{\frac{1}{2}}g_2(\tau)d\tau\right).
\end{align*}
 If $\sigma_{u,v}>\displaystyle{\frac{1}{2}}$, we have
\begin{align*}
&\lambda \rho_2=\|T_2(u,v)\|_\infty=T_2(u,v)(\sigma_{u,v})\\&=
\int_{\sigma_{u,v}}^{1}\varphi_{p_2}^{-1}\left(\int_{\sigma_{u,v}}^{s}g_2(\tau)f_2(\tau,u(\tau),v(\tau))d\tau\right)\,ds\leq \int_{\frac{1}{2}}^{1}\varphi_{p_2}^{-1}\left(\int_{\frac{1}{2}}^{s}g_2(\tau)f_2(\tau,u(\tau),v(\tau))d\tau\right)\,ds\\
&=\rho_2 \int_{\frac{1}{2}}^{1}\varphi_{p_2}^{-1}\left(\int_{\frac{1}{2}}^{s}g_2(\tau)\frac{f_2(\tau,u(\tau),v(\tau))}{\rho_2^{p_2-1}}d\tau\right)\,ds\leq\rho_2 \varphi_{p_2}^{-1}(f_2^{\rho_1,\rho_2})\int_{\frac{1}{2}}^{1}\varphi_{p_2}^{-1}\left(\int_{\frac{1}{2}}^{s}g_2(\tau)d\tau\right)\,ds.\\
\end{align*}
Then, in both cases, we have
\begin{align*}
 &\lambda \rho_2= \|T_2(u,v)\|_\infty=T_2(u,v)(\sigma_{u,v})\leq\rho_2 \varphi_{p_2}^{-1}(f_2^{\rho_1,\rho_2})\times\\
 &\max\Bigg\{\int_{0}^{\frac{1}{2}}\varphi_{p_2}^{-1}\left(\int_{s}^{\frac{1}{2}}g_2(\tau)d\tau\right)\,ds+h_{22}\,\varphi_{p_2}^{-1}\left(\int_{0}^{\frac{1}{2}}g_2(\tau)d\tau\right),\int_{\frac{1}{2}}^{1}
\varphi_{p_2}^{-1}\left(\int_{\frac{1}{2}}^{s}g_2(\tau)d\tau\right)\,ds\,\Bigg\}\\
&=\rho_2 \varphi_{p_2}^{-1}(f_2^{\rho_1,\rho_2})\frac{1}{m_2}.
\end{align*}
Using the hypothesis \eqref{eqmestt} and the strictly monotonicity of $\varphi_{p_2}^{-1}$ we obtain $\lambda \rho_2 <\rho_2 .$ This
contradicts the fact that $\lambda \geq 1$ and proves the result.
\end{proof}

We give a first Lemma that shows that the index is 0 on a set $V_{\rho_1,\rho_2 }$.
\begin{lem}\label{idx0b1}
Assume that:\ \\
$(\mathrm{I}^{0}_{\rho_1,\rho_2})$  there exist $\rho_1,\rho_2>0$ such that for every $i=1,2$
\begin{equation}\label{eqMest}
f_{i,(\rho_1, \rho_2)} > \varphi_{p_i}(M_i),
\end{equation}{}
where
$$f_{1,({\rho_1,\rho_2 })}= \inf \Bigl\{ \frac{f_1(t,u,v)}{ \rho_1^{p_1-1}}:\; (t,u,v)\in [0,b_1]\times[c_1\rho_1,\rho_1]\times[0, \rho_2]\Bigr\},$$
$$f_{2,({\rho_1,\rho_2 })}= \inf \Bigl\{ \frac{f_2(t,u,v)}{ \rho_2^{p_2-1}}:\; (t,u,v)\in [a_2,b_2]\times[0,\rho_1]\times[c_2\rho_2, \rho_2]\Bigr\},$$
$$\frac{1}{M_1}= \int_{0}^{b_1} \varphi_{p_1}^{-1}\left(\int_{0}^{s} g_1(\tau)\,d\tau\right)ds+h_{11}\,\varphi_{p_1}^{-1}\left(\int_{0}^{b_1}g_1(\tau)d\tau\right),$$
and
 $$\frac{1}{M_2}=\frac{1}{2}\min _{a_2\leq \nu\leq b_2} \left\{\int_{a_2}^{\nu}\varphi_{p_2}^{-1}(\int_{s}^{\nu}
g_2(\tau)d\tau)ds
+\int_{\nu}^{b_2}\varphi_{p_2}^{-1}(\int_{\nu}^{s}
g_2(\tau)d\tau)ds+h_{21}\varphi_{p_2}^{-1}(\int_{a_2}^{\nu}g_2(\tau)d\tau)\right\}.$$
Then $i_{K}(T,V_{\rho_1,\rho_2})=0$.
\end{lem}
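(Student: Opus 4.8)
The plan is to apply property~(1) of the fixed point index lemma recalled in Section~\ref{sec2} with the constant pair $e:=(1,1)$, which lies in $K\setminus\{0\}$ since the constant function $1$ belongs to both $K_1$ and $K_2$. I would show that
\[
(u,v)\neq T(u,v)+\lambda\,(1,1)\quad\text{for every }(u,v)\in\partial V_{\rho_1,\rho_2}\ \text{and every }\lambda>0,
\]
which immediately yields $i_K(T,V_{\rho_1,\rho_2})=0$. Arguing by contradiction, suppose there are $(u,v)\in\partial V_{\rho_1,\rho_2}$ and $\lambda>0$ with $u=T_1(u,v)+\lambda$ and $v=T_2(u,v)+\lambda$. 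By the boundary description in Lemma~\ref{esca}, there is an index $i\in\{1,2\}$ with $\min_{t\in[a_i,b_i]}w_i(t)=c_i\rho_i$, while in any case $\|u\|_\infty\le\rho_1$ and $\|v\|_\infty\le\rho_2$. I would then split according to $i=1$ or $i=2$, writing $f_i$ for $f_i(\tau,u(\tau),v(\tau))$.

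In the case $i=1$, since $u\in K_1$ is nonincreasing the relation $\min_{[a_1,b_1]}u=c_1\rho_1$ forces $u(b_1)=c_1\rho_1$, so that $c_1\rho_1\le u(\tau)\le\rho_1$ and $0\le v(\tau)\le\rho_2$ for all $\tau\in[0,b_1]$; hence $f_1(\tau,u(\tau),v(\tau))\ge f_{1,(\rho_1,\rho_2)}\,\rho_1^{p_1-1}$ there. Evaluating $u=T_1(u,v)+\lambda$ at $t=0$, discarding the nonnegative tail of the outer integral over $[b_1,1]$ and using $B_1(w)\ge h_{11}w$ from $(C4)$ together with the homogeneity $\varphi_{p_1}^{-1}(ab)=\varphi_{p_1}^{-1}(a)\varphi_{p_1}^{-1}(b)$, I would obtain
\[
u(0)=T_1(u,v)(0)+\lambda\ge \rho_1\,\varphi_{p_1}^{-1}\big(f_{1,(\rho_1,\rho_2)}\big)\,\frac{1}{M_1}+\lambda.
\]
Since $f_{1,(\rho_1,\rho_2)}>\varphi_{p_1}(M_1)$ gives $\varphi_{p_1}^{-1}(f_{1,(\rho_1,\rho_2)})>M_1$, this forces $u(0)>\rho_1$; as $u$ is nonincreasing, $u(0)=\|u\|_\infty$, contradicting $\|u\|_\infty\le\rho_1$.

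In the case $i=2$ the same reasoning on $[a_2,b_2]$ yields $f_2(\tau,u(\tau),v(\tau))\ge f_{2,(\rho_1,\rho_2)}\,\rho_2^{p_2-1}$ there, and since $v=T_2(u,v)+\lambda$ attains its maximum at $\sigma_{u,v}$ we have $\|v\|_\infty=T_2(u,v)(\sigma_{u,v})+\lambda$. The crucial point is that, by the very definition of $\sigma_{u,v}$, the two branches of $T_2(u,v)$ coincide at $\sigma_{u,v}$, so the peak value is the average of the two one-sided expressions,
\[
T_2(u,v)(\sigma_{u,v})=\tfrac12\Big(L+R\Big),\quad L=\int_0^{\sigma_{u,v}}\!\varphi_{p_2}^{-1}\Big(\int_s^{\sigma_{u,v}}\! g_2 f_2\,d\tau\Big)ds+B_2\Big(\varphi_{p_2}^{-1}\big(\int_0^{\sigma_{u,v}}\! g_2 f_2\,d\tau\big)\Big),\quad R=\int_{\sigma_{u,v}}^1\!\varphi_{p_2}^{-1}\Big(\int_{\sigma_{u,v}}^s\! g_2 f_2\,d\tau\Big)ds,
\]
which is exactly where the factor $\tfrac12$ and the minimum over $\nu\in[a_2,b_2]$ in the definition of $M_2$ originate. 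I would then distinguish the three positions $\sigma_{u,v}<a_2$, $\sigma_{u,v}\in[a_2,b_2]$ and $\sigma_{u,v}>b_2$; in each, restricting the outer integrals to $[a_2,b_2]$, replacing the inner limit $\sigma_{u,v}$ by the nearest endpoint of $[a_2,b_2]$ (legitimate since $g_2f_2\ge0$), applying the lower bound on $f_2$ and the homogeneity of $\varphi_{p_2}^{-1}$, I would bound the bracket below by the $\nu$-expression defining $1/M_2$ with $\nu=b_2$, $\nu=\sigma_{u,v}$ and $\nu=a_2$ respectively. Taking the minimum over $\nu$ unifies the three cases and gives $T_2(u,v)(\sigma_{u,v})\ge\rho_2\,\varphi_{p_2}^{-1}(f_{2,(\rho_1,\rho_2)})\,\tfrac{1}{M_2}>\rho_2$, using $f_{2,(\rho_1,\rho_2)}>\varphi_{p_2}(M_2)$; hence $\|v\|_\infty>\rho_2$, contradicting $\|v\|_\infty\le\rho_2$. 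In either case we reach a contradiction, so the index is $0$ on $V_{\rho_1,\rho_2}$. The delicate step, and the one I expect to be the main obstacle, is precisely this case $i=2$: correctly handling the unknown location of $\sigma_{u,v}$ and matching the resulting one-sided estimates to the $\min_\nu$ appearing in $M_2$.
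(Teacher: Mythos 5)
Your proposal follows essentially the same route as the paper's proof: the constant $e=(1,1)$, the boundary characterization from Lemma \ref{esca}, the split according to which component realizes $\min=c_i\rho_i$, and, for the second component, the three sub-cases determined by the position of $\sigma_{u,v}$, with the factor $\tfrac12$ in $M_2$ absorbed by summing the two one-sided expressions that coincide at the peak. The only slip is that your assignment of $\nu$ to the two extreme cases is reversed: when $\sigma_{u,v}\le a_2$ only the right-hand branch contributes on $[a_2,b_2]$ and produces the $\nu=a_2$ term $\int_{a_2}^{b_2}\varphi_{p_2}^{-1}\left(\int_{a_2}^{s}g_2(\tau)\,d\tau\right)ds$ (the $h_{21}$ summand is not available there), whereas $\sigma_{u,v}\ge b_2$ uses the left-hand branch and matches $\nu=b_2$; with these labels swapped back your argument is exactly the one in the paper.
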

\begin{proof}
Let $e(t)\equiv 1$ for $t\in [0,1]$. Then $(e,e)\in K$. We prove that
\begin{equation*}
(u,v)\ne T(u,v)+\lambda (e,e)\quad\text{for } (u,v)\in \partial V_{\rho_1,\rho_2 }\quad\text{and } \lambda \geq 0.
\end{equation*}
In fact, if this does not happen, there exist $(u,v)\in \partial V_{\rho_1,\rho_2 }$ and $\lambda \geq 0$ such that
$(u,v)=T(u,v)+\lambda (e,e)$. We examine the two cases:\\

Case $(1)$:   $ c_1\rho_1 \le u(t) \le \rho_1$  for  $t\in [0,b_1]$  and  $0\leq v(t)\leq \rho_2$ for  $t\in [0,1]$.\\
 Thus for $t\in [0,b_1]$, we have
\begin{align*}
&\rho_1\geq u(t)\ \\
&= \int_{t}^{1}\varphi_{p_1}^{-1}\left(\int_0^s g_1(\tau)f_1(\tau,u(\tau),v(\tau))d\tau\right)ds+B_1\left(\varphi_{p_1}^{-1}\left(\int_{0}^{1}g_1(\tau)
f_1(\tau,u(\tau),v(\tau))d\tau\right)\right) +\lambda\ \\
 &\geq \int_{t}^{b_1}\varphi_{p_1}^{-1}\left(\int_{0}^{s}
g_1(\tau)f_1(\tau,u(\tau),v(\tau))d\tau\right)\,ds+h_{11}\,\varphi_{p_1}^{-1}\left(\int_{0}^{1}g_1(\tau)
f_1(\tau,u(\tau),v(\tau))d\tau
\right) + \lambda\ \\
&\geq \int_{t}^{b_1}\varphi_{p_1}^{-1}\left(\int_{0}^{s}
g_1(\tau)f_1(\tau,u(\tau),v(\tau))d\tau\right)\,ds+h_{11}\,\varphi_{p_1}^{-1}\left(\int_{0}^{b_1}g_1(\tau)
f_1(\tau,u(\tau),v(\tau))d\tau
\right) + \lambda\ \\
&=\rho_1\int_{t}^{b_1}\varphi_{p_1}^{-1}\left(\int_{0} ^{s}
g_1(\tau)\frac{f_1(\tau,u(\tau),v(\tau))}{\rho_1^{p_1-1}}d\tau\right)ds+\rho_1 h_{11}\varphi_{p_1}^{-1}\left(\int_{0}^{b_1}g_1(\tau)
\frac{f_1(\tau,u(\tau),v(\tau))}{\rho_1^{p_1-1}}d\tau \right)
+ \lambda.
\end{align*}
For $t=0$ we obtain
\begin{align*}
\rho_1&\geq\rho_1\varphi_{p_1}^{-1}(f_1,_{(\rho_1,\rho_2)})\left(\int_{0}^{b_1}\varphi_{p_1}^{-1}\left(\int_{0}^{s}
g_1(\tau)\,d\tau\right)\,ds+h_{11}\,\varphi_{p_1}^{-1}\left(\int_{0}^{b_1}g_1(\tau)d\tau\right)\right)
+ \lambda\\
&>{\rho_1}\varphi_{p_1}^{-1}\left(f_{1,(\rho_1,\rho_2)}\right)\frac{1}{M_{1}}+{\lambda}.
\end{align*}
Using the hypothesis \eqref{eqMest} we obtain $\rho_1>\rho_1 +\lambda $, a contradiction.\\
 Case $(2)$:    $ 0\leq u(t)\leq \rho_1$ for  $t\in [0,1]$   and    $c_2\rho_2 \le v(t) \le \rho_2$.\\
 We  distinguish three cases:\ \\
Case (i)   $0<\sigma_{u,v}\leq a_2$.\ \\
 Therefore we get
\begin{align*}
\rho_2&\geq v(\sigma_{u,v})=T_2(u,v)(\sigma_{u,v})+\lambda=\int_{\sigma_{u,v}}^{1}\varphi_{p_2}^{-1}\left(\int_{\sigma_{u,v}}^{s}
g_2(\tau)f_2(\tau,u(\tau),v(\tau))d\tau\right)\,ds+\lambda\\
&\geq\int_{a_2}^{b_2}\varphi_{p_2}^{-1}\left(\int_{a_2}^{s} g_2(\tau)f_2(\tau,u(\tau),v(\tau))d\tau\right)\,ds+\lambda\\
&=\rho_2 \int_{a_2}^{b_2}\varphi_{p_2}^{-1}\left(\int_{a_2}^{s} g_2(\tau)\frac{f_2(\tau,u(\tau),v(\tau))}{\rho_2^{p_2-1}}\,d\tau\right)ds+\lambda\\
&\geq\rho_2\varphi_{p_2}^{-1}(f_2,_{(\rho_1,\rho_2)})\left(\int_{a_2}^{b_2}\varphi_{p_2}^{-1}\left(\int_{a_2}^{s}
g_2(\tau)\,d\tau\right)ds\right)+\lambda
\geq\rho_2\varphi_{p_2}^{-1}(f_2,_{(\rho_1,\rho_2)})\frac{1}{M_2}+\lambda.
\end{align*}
Using the hypothesis \eqref{eqMest} we obtain $\rho_2>\rho_2 +\lambda $, a contradiction.\ \\
Case (ii) $\sigma_{u,v} \geq b_2$.
\begin{eqnarray*}
&&\rho_2\geq v(\sigma_{u,v})=T_2(u,v)(\sigma_{u,v})+\lambda
=\int_{0}^{\sigma_{u,v}} \varphi_{p_2}^{-1}\left(\int_{s}^{\sigma_{u,v}} g_2(\tau)f_2(\tau,u(\tau),v(\tau))d\tau\right)ds\\&&+B_2\left(\varphi_{p_2}^{-1}\left(\int_{0}^{\sigma_{u,v}}g_2(\tau)f_2(\tau,u(\tau),v(\tau))d\tau\right)\right)  +\lambda\\
&&\geq \int_{a_2}^{b_2} \varphi_{p_2}^{-1}\left(\int_{s}^{b_2}
g_2(\tau)f_2(\tau,u(\tau),v(\tau))d\tau\right)ds+
h_{21}\,\varphi_{p_2}^{-1}\left(\int_{a_2}^{b_2}g_2(\tau)f_2(\tau,u(\tau),v(\tau))d\tau\right) +\lambda\\
&&=\rho_2 \int_{a_2}^{b_2}\varphi_{p_2}^{-1}\left(\int_{s}^{b_2}
g_2(\tau)\frac{f_2(\tau,u(\tau),v(\tau))}{\rho_2^{p_1-1}}d\tau\right)
ds\\&&+
\rho_2 h_{21}\,\varphi_{p_2}^{-1}\left(\int_{a_2}^{b_2}g_2(\tau)\frac{f_2(\tau,u(\tau),v(\tau))}{\rho_2^{p_2-1}}\,d\tau\right) +\lambda\\
&&\geq\rho_2\varphi_{p_2}^{-1}(f_2,_{(\rho_1,\rho_2)})\left(\int_{a_2}^{b_2}\varphi_{p_2}^{-1}\left(\int_{s}^{b_2}
g_2(\tau)\,d\tau\right)ds+h_{21}\,\varphi_{p_2}^{-1}\left(\int_{a_2}^{b_2}g_2(\tau)d\tau\right)\right)\,
+\lambda\\
&&\geq\rho_2\varphi_{p_2}^{-1}(f_2,_{(\rho_1,\rho_2)})\frac{1}{M_2}+\lambda.
\end{eqnarray*}
Using the hypothesis \eqref{eqMest} we obtain $\rho_2>\rho_2 +\lambda $, a contradiction.\\
Case (iii)  $a_2<\sigma_{u,v}<b_2$.
\begin{align*}
2\rho_2&\geq 2v(\sigma_{u,v})=2\lambda+2T_2(u,v)(\sigma_{u,v})= 2\lambda+\int_{0}^{\sigma_{u,v}} \varphi_{p_2}^{-1}\left(\int_{s}^{\sigma_{u,v}} g_2(\tau)f_2(\tau,u(\tau),v(\tau))
d\tau\right)\,ds\\
&+B_2\left(\varphi_{p_2}^{-1}\left(\int_{0}^{\sigma_{u,v}}g_2(\tau)f_2(\tau,u(\tau),v(\tau))\,d\tau\right)\right) +\int_{\sigma_{u,v}}^{1} \varphi_{p_2}^{-1}\left(\int_{\sigma_{u,v}}^{s} g_2(\tau)f_2(\tau,u(\tau),v(\tau))d\tau\right)\,ds\\
&\geq 2\lambda+\int_{a_2}^{\sigma_{u,v}}
\varphi_{p_2}^{-1}\left(\int_{s}^{\sigma_{u,v}}
g_2(\tau)f_2(\tau,u(\tau),v(\tau))d\tau\right)\,ds\\
&+h_{21}\left(\varphi_{p_2}^{-1}\left(\int_{a_2}^{\sigma_{u,v}}g_2(\tau)f_2(\tau,u(\tau),v(\tau))\,d\tau\right)\right)+
\int_{\sigma_{u,v}}^{b_2} \varphi_{p_2}^{-1}\left(\int_{\sigma_{u,v}}^{s} g_2(\tau)f_2(\tau,u(\tau),v(\tau))d\tau\right)\,ds\\
&=2\lambda+\rho_2 \Bigg[\int_{a_2}^{\sigma_{u,v}}
\varphi_{p_2}^{-1}\left(\int_{s}^{\sigma_{u,v}}
g_2(\tau)\frac{f_2(\tau,u(\tau),v(\tau))}{\rho_2^{p_2-1}}d\tau\right)\,ds\\
&+h_{21}\,\varphi_{p_2}^{-1}\left(\int_{a_2}^{\sigma_{u,v}}g_2(\tau)\frac{f_2(\tau,u(\tau),v(\tau))}{\rho_2^{p_2-1}}\,d\tau\right)
+ \int_{\sigma_{u,v}}^{b_2} \varphi_{p_2}^{-1}\left(\int_{\sigma_{u,v}}^{s}g_2(\tau)\frac{f_2(\tau,u(\tau),v(\tau))}{\rho_2^{p_2-1}}d\tau\right)\,ds\Bigg]\\
&\geq
2\lambda+\rho_2\varphi_{p_2}^{-1}(f_2,_{(\rho_1,\rho_2)})\Bigg[\int_{a_2}^{\sigma_{u,v}}
\varphi_{p_2}^{-1}\left(\int_{s}^{\sigma_{u,v}}
g_2(\tau)d\tau\right)\,ds \\
&+h_{21}\,\varphi_{p_2}^{-1}\left(\int_{a_2}^{\sigma_{u,v}}g_2(\tau)\,d\tau\right)
+ \int_{\sigma_{u,v}}^{b_2} \varphi_{p_2}^{-1}\left(\int_{\sigma_{u,v}}^{s}g_2(\tau)d\tau\right)\,ds\Bigg]\\
&\geq 2\lambda+
2\rho_2\varphi_{p_2}^{-1}(f_2,_{(\rho_1,\rho_2)})\frac{1}{M_2}.
\end{align*}
Using the hypothesis \eqref{eqMest} we obtain $\rho_2>\lambda+\rho_2 $, a contradiction.\\
\end{proof}
\begin{rem}
We point out that a stronger, but easier to check, hypothesis  than \eqref{eqMest} is 
\begin{equation*}
f_{i,(\rho_1, \rho_2)} > \varphi_{p_i}(\tilde{M}_i),
\end{equation*}{}
where
$$\frac{1}{\tilde{M}_1}= \int_{0}^{b_1} \varphi_{p_1}^{-1}\left(\int_{0}^{s} g_1(\tau)\,d\tau\right)ds$$
and
 $$\frac{1}{\tilde{M}_2}=\frac{1}{2}\min _{a_2\leq \nu\leq b_2} \left\{\int_{a_2}^{\nu}\varphi_{p_2}^{-1}(\int_{s}^{\nu}
g_2(\tau)d\tau)ds
+\int_{\nu}^{b_2}\varphi_{p_2}^{-1}(\int_{\nu}^{s}
g_2(\tau)d\tau)ds\right\}.$$
\end{rem}

In the following Lemma we exploit an idea that was used in \cite{gipp-nonlin, gipp-nodea} and we provide a result of index 0 controlling the growth of just one nonlinearity $f_i$, at the cost of having to deal with a larger domain. Nonlinearities with different growths were considered for examples in~\cite{precup1, precup2, ya1}.
\begin{lem}\label{idx0b3}
Assume that
\begin{enumerate}
\item[$(\mathrm{I}^{0}_{\rho_1,\rho_2})^{\star}$] there exist $\rho_1,\rho_2>0$ such that for some $i\in\{1,2\}$ we have
\begin{equation}\label{eqMest1}
f^*_{i,(\rho_1, \rho_2)}>\varphi_{p_i}(M_i),
\end{equation}{}
\end{enumerate}
where
\begin{equation*}
f^*_{i,(\rho_1,{\rho_2})}=\inf \Bigl\{ \frac{f_i(t,u,v)}{ \rho_i^{p_i-1}}:\; (t,u,v)\in [a_i,b_i]\times[0,\rho_1]\times[0, \rho_2]\Bigr\}.
\end{equation*}
Then $i_{K}(T,V_{\rho_1,\rho_2})=0$.
\end{lem}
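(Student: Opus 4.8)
\emph{Proof plan.} The natural strategy is to invoke property (1) of the fixed point index (the first Lemma of Section~\ref{sec2}) with the element $e\equiv 1$. The constant function $1$ is nonnegative, concave and nonincreasing, so $(e,e)\in K\setminus\{0\}$, and it suffices to show that
$$(u,v)\neq T(u,v)+\lambda(e,e)\qquad\text{for all }(u,v)\in\partial V_{\rho_1,\rho_2}\text{ and all }\lambda\geq 0.$$
I would argue by contradiction: suppose there are $(u,v)\in\partial V_{\rho_1,\rho_2}$ and $\lambda\geq 0$ with $(u,v)=T(u,v)+\lambda(e,e)$, and then derive the impossible inequality $\rho_i>\rho_i+\lambda$ by working \emph{only} with the component indexed by the $i$ for which $(\mathrm{I}^{0}_{\rho_1,\rho_2})^{\star}$ is assumed.

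The conceptual point that makes a single growth condition enough (unlike Lemma~\ref{idx0b1}, which controls both $f_1$ and $f_2$) is that $f^*_{i,(\rho_1,\rho_2)}$ is an infimum over the \emph{full} rectangle $[a_i,b_i]\times[0,\rho_1]\times[0,\rho_2]$, rather than over a slab where the $i$-th coordinate is pinned in $[c_i\rho_i,\rho_i]$. By Lemma~\ref{esca}, every $(u,v)\in\partial V_{\rho_1,\rho_2}$ obeys $0\le u(t)\le\rho_1$ and $0\le v(t)\le\rho_2$ for all $t$; hence the pointwise lower bound
$$f_i(t,u(t),v(t))\ge \rho_i^{p_i-1}\,f^*_{i,(\rho_1,\rho_2)}\qquad(t\in[a_i,b_i])$$
holds for \emph{every} boundary point, irrespective of which of the two components realizes the defining equality $\min_{[a_k,b_k]}w_k=c_k\rho_k$. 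This decouples the estimate from the remaining equation, so I only need to analyse the $i$-th component.

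With this bound available, the computation reproduces the relevant case of Lemma~\ref{idx0b1}. If $i=1$, I would use $u=T_1(u,v)+\lambda$, evaluate at $t=0$ (where $u$ is maximal, as $u\in K_1$ is nonincreasing), bound $B_1$ from below by $h_{11}\varphi_{p_1}^{-1}(\cdot)\ge 0$ via $(C4)$, insert the lower bound on $f_1$ and factor out $\varphi_{p_1}^{-1}(f^*_{1,(\rho_1,\rho_2)})$, reaching $\rho_1\ge u(0)\ge\rho_1\,\varphi_{p_1}^{-1}(f^*_{1,(\rho_1,\rho_2)})\tfrac{1}{M_1}+\lambda>\rho_1+\lambda$, which is absurd. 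If $i=2$, I would evaluate the second component at $\sigma_{u,v}$, where $T_2(u,v)$ attains its maximum, so that $\rho_2\ge v(\sigma_{u,v})=T_2(u,v)(\sigma_{u,v})+\lambda$, and split according to the position of the free node into $\sigma_{u,v}\le a_2$, $\sigma_{u,v}\ge b_2$ and $a_2<\sigma_{u,v}<b_2$, exactly as in Case~(2) of Lemma~\ref{idx0b1}, again obtaining $\rho_2>\rho_2+\lambda$. In either case the assumed excluded relation fails on $\partial V_{\rho_1,\rho_2}$, and property (1) of the index yields $i_{K}(T,V_{\rho_1,\rho_2})=0$.

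I expect the main obstacle to be the sub-case $a_2<\sigma_{u,v}<b_2$ when $i=2$: there the maximal value $T_2(u,v)(\sigma_{u,v})$ must be estimated simultaneously from both sides of the node, which is precisely what forces the doubling device and the factor $\tfrac12\min_{a_2\le\nu\le b_2}$ in the definition of $M_2$. The rest is bookkeeping, namely confining the lower bound on $f_i$ to $[a_i,b_i]$ while $T$ integrates over larger intervals; this is legitimate because $f_i\ge 0$ everywhere by $(C1)$ and $B_i(\cdot)\ge h_{i1}\varphi_{p_i}^{-1}(\cdot)\ge 0$ by $(C4)$, so discarding the remaining nonnegative contributions only weakens the estimate in the harmless direction.
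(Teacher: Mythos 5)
Your proposal is correct and takes essentially the same route as the paper, whose proof of this lemma consists of the single remark that one proceeds as in Lemma~\ref{idx0b1} for the one index $i$ where \eqref{eqMest1} holds. Your key observation --- that the infimum in $f^*_{i,(\rho_1,\rho_2)}$ over the full rectangle $[0,\rho_1]\times[0,\rho_2]$, combined with the bounds $\|u\|_\infty\le\rho_1$, $\|v\|_\infty\le\rho_2$ from Lemma~\ref{esca}, makes the estimate on the $i$-th component valid at every point of $\partial V_{\rho_1,\rho_2}$ regardless of which component attains the defining equality --- is precisely what the paper leaves implicit.
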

\begin{proof}
Suppose that the condition \eqref{eqMest1} holds for $i=1$. Let  $(u,v)\in \partial V_{\rho_1,\rho_2 }$ and $\lambda \geq 0$ such that
$(u,v)=T(u,v)+\lambda (e,e)$. Thus we proceed as in the proof of Lemma \ref {idx0b1}.
\end{proof}

The proof of the next result regarding the existence of at least one, two or three positive solutions follows by the properties of fixed point index and is omitted. It is possible to state results for four or more positive solutions, in a similar way as in~\cite{kljdeds},  by expanding the lists in conditions $(S_{5}),(S_{6})$.
\begin{thm}\label{mult-sys}
The system \eqref{syst} has at least one positive solution in $K$ if one of the following conditions holds.
\begin{enumerate}
\item[$(S_{1})$]  For $i=1,2$ there exist $\rho _{i},r _{i}\in (0,\infty )$ with $\rho_{i}<r _{i}$ such that $(\mathrm{I}_{\rho _{1},\rho_2}^{0})\;\;[\text{or}\;(\mathrm{I}_{\rho _{1},\rho_2}^{0})^{\star }]$, $(\mathrm{I}_{r _{1},r_2}^{1})$ hold.
\item[$(S_{2})$] For $i=1,2$ there exist $\rho _{i},r _{i}\in (0,\infty )$ with $\rho_{i}<c_i r _{i}$ such that $(\mathrm{I}_{\rho _{1},\rho_2}^{1}),\;\;(\mathrm{I}_{r _{1},r_2}^{0})$ hold.
\end{enumerate}
The system \eqref{syst} has at least two positive solutions in $K$ if one of the following conditions holds.
\begin{enumerate}
\item[$(S_{3})$] For $i=1,2$ there exist $\rho _{i},r _{i},s_i\in (0,\infty )$ with $\rho _{i}<r_i <c_i s _{i}$ such that $(\mathrm{I}_{\rho_{1},\rho_2}^{0})$,
$[\text{or}\;(\mathrm{I}_{\rho _{1},\rho_2}^{0})^{\star }],\;(\mathrm{I}_{r _{1},r_2}^{1})$ $\text{and}\;\;(\mathrm{I}_{s _{1},s_2}^{0})$ hold.
\item[$(S_{4})$] For $i=1,2$ there exist $\rho _{i},r _{i},s_i\in (0,\infty )$ with $\rho _{i}<c_i r _{i}$ and $r _{i}<s _{i}$ such that
$(\mathrm{I}_{\rho _{1},\rho_2}^{1})$, $(\mathrm{I}_{r _{1},r_2}^{0})$ $\text{and}\;\;(\mathrm{I}_{s _{1},s_2}^{1})$ hold.
\end{enumerate}
The system \eqref{syst} has at least three positive solutions in $K$ if one of the following conditions holds.
\begin{enumerate}
\item[$(S_{5})$] For $i=1,2$ there exist $\rho _{i},r _{i},s_i,\delta_i\in (0,\infty )$ with $\rho _{i}<r _{i}<c_i s _{i}$ and $s _{i}<\delta_{i}$ such that
$(\mathrm{I}_{\rho _{1},\rho_2}^{0})\;\;[\text{or}\;(\mathrm{I}_{\rho _{1},\rho_2}^{0})^{\star }],$ $(\mathrm{I}_{r _{1},r_2}^{1}),
\;\;(\mathrm{I}_{s_1,s_2}^{0})\;\;\text{and}\;\;(\mathrm{I}_{\delta _{1},\delta_2}^{1})$ hold.
\item[$(S_{6})$] For $i=1,2$ there exist $\rho _{i},r _{i},s_i,\delta_i\in (0,\infty )$ with $\rho _{i}<c_i r _{i}$ and $r _{i}<s _{i}<c_i \delta _{i}$
such that $(\mathrm{I}_{\rho _{1},\rho_2}^{1}),\;\;(\mathrm{I}_{r_{1},r_2}^{0}),\;\;(\mathrm{I}_{s _{1},s_2}^{1})$ $\text{and}
\;\;(\mathrm{I}_{\delta _{1},\delta_2}^{0})$ hold.
\end{enumerate}
\end{thm}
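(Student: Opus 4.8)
The plan is to assemble the three index computations already established — Lemma~\ref{ind1b} (the index equals $1$ on $K_{\rho_1,\rho_2}$), and Lemmas~\ref{idx0b1} and~\ref{idx0b3} (the index equals $0$ on $V_{\rho_1,\rho_2}$) — together with the nesting relations of Lemma~\ref{esca} and the existence/excision property~(4) of the fixed point index recorded in the first Lemma of Section~\ref{sec2}. Since $T$ is a compact self-map of $K$ by Lemma~\ref{compact}, the index $i_K(T,\cdot)$ is well defined on every relatively open bounded subset on whose boundary $T$ is fixed-point-free, and each cited Lemma already guarantees this boundary condition whenever its hypothesis holds, so the only work is combinatorial.

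For a single solution under $(S_1)$ I would argue as follows. Hypothesis $(\mathrm{I}^0_{\rho_1,\rho_2})$ $[$or $(\mathrm{I}^0_{\rho_1,\rho_2})^\star]$ gives $i_K(T,V_{\rho_1,\rho_2})=0$, while $(\mathrm{I}^1_{r_1,r_2})$ gives $i_K(T,K_{r_1,r_2})=1$. The crucial inclusion is $\overline{V_{\rho_1,\rho_2}}\subset K_{r_1,r_2}$: indeed $V_{\rho_1,\rho_2}\subset K_{\rho_1,\rho_2}$ by Lemma~\ref{esca}, so its closure lies in $\{(u,v)\in K:\|u\|_\infty\le\rho_1,\ \|v\|_\infty\le\rho_2\}$, which sits inside $K_{r_1,r_2}$ because $\rho_i<r_i$. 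Property~(4) then yields a fixed point in $K_{r_1,r_2}\setminus\overline{V_{\rho_1,\rho_2}}$, a positive solution. Under $(S_2)$ the roles reverse: $(\mathrm{I}^1_{\rho_1,\rho_2})$ gives index $1$ on $K_{\rho_1,\rho_2}$ and $(\mathrm{I}^0_{r_1,r_2})$ gives index $0$ on $V_{r_1,r_2}$; now the comparison $\rho_i<c_ir_i$ combined with $K_{c_1r_1,c_2r_2}\subset V_{r_1,r_2}$ from Lemma~\ref{esca} gives $\overline{K_{\rho_1,\rho_2}}\subset V_{r_1,r_2}$, and property~(4) produces a fixed point in $V_{r_1,r_2}\setminus\overline{K_{\rho_1,\rho_2}}$.

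The multiplicity statements are obtained by iterating this mechanism along a nested chain of sets on which the index alternates between $0$ and $1$. For instance, under $(S_3)$ the three hypotheses give $i_K(T,V_{\rho_1,\rho_2})=0$, $i_K(T,K_{r_1,r_2})=1$, $i_K(T,V_{s_1,s_2})=0$; the inequalities $\rho_i<r_i$ and $r_i<c_is_i$ furnish, exactly as above, the inclusions $\overline{V_{\rho_1,\rho_2}}\subset K_{r_1,r_2}$ and $\overline{K_{r_1,r_2}}\subset V_{s_1,s_2}$, so property~(4) applied to the two consecutive pairs yields one solution in $K_{r_1,r_2}\setminus\overline{V_{\rho_1,\rho_2}}$ and a second, distinct one in $V_{s_1,s_2}\setminus\overline{K_{r_1,r_2}}$. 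The conditions $(S_4),(S_5),(S_6)$ are handled identically, using at each step whichever comparison ($\rho_i<r_i$ for a $V\subset K$ passage, or $\rho_i<c_ir_i$, $r_i<c_is_i$ for a $K\subset V$ passage) is dictated by the alternation.

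The only genuinely delicate point is the bookkeeping of these closure inclusions: each application of property~(4) requires the inner set to have its closure strictly inside the outer one, and this is precisely where the two kinds of radius comparisons and the sandwich $K_{c_1\rho_1,c_2\rho_2}\subset V_{\rho_1,\rho_2}\subset K_{\rho_1,\rho_2}$ of Lemma~\ref{esca} enter. Once the chain of strict inclusions is verified, the solutions produced in successive annuli are automatically distinct because the annuli are disjoint, and each is nontrivial since $0$ lies in the innermost set; hence they are genuine positive solutions in the sense defined above. This is exactly why the authors can omit the details — the substance was already spent in proving the index Lemmas, and what remains is the routine alternation argument just sketched.
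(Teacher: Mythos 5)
Your proposal is correct and is precisely the argument the paper has in mind: the paper omits this proof, stating only that it ``follows by the properties of fixed point index,'' and your assembly of Lemmas~\ref{ind1b}, \ref{idx0b1}, \ref{idx0b3} with the inclusions from Lemma~\ref{esca} and property~(4) of the index is the standard alternation argument being alluded to. The closure inclusions you verify (using $\rho_i<r_i$ for $\overline{V}\subset K$ and $\rho_i<c_ir_i$ for $\overline{K}\subset V$ via $K_{c_1r_1,c_2r_2}\subset V_{r_1,r_2}$) are exactly the right bookkeeping.
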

\section{Non-existence results}
We now provide some  non-existence results for system \eqref{syst}.\\

\begin{thm}
Assume that one of the following conditions holds.
\begin{enumerate}
\item For  $i=1,2$,
\begin{equation}\label{cond1}
f_i(t,u_1,u_2)< \varphi _{p_i}(m_i u_i)\ \text{for every}\ t\in [0,1] \text{        and        } u_i>0.
\end{equation}
\item For  $i=1,2$,
\begin{equation}\label{cond2}
f_i(t,u_1,u_2)> \varphi _{p_i}\left(\frac{M_i}{c_i} u_i\right) \text{for every}\ t\in [a_i,b_i] \text{        and        }  u_i>0.
\end{equation}
\item There exists $k\in\{1,2\}$ such that \eqref{cond1} is verified for $f_k$ and for  $j\neq k$  condition \eqref{cond2}  is verified for $f_j$.
\end{enumerate}
Then there is no positive solution of the system \eqref{syst} in $K$.
\end{thm}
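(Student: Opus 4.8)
The plan is to argue by contradiction, reusing the integral estimates already established in Lemmas \ref{ind1b} and \ref{idx0b1} but run in the reverse direction. Suppose $(u,v)\in K$ is a positive solution of \eqref{syst}, so that $\|(u,v)\|=\max\{\|u\|_\infty,\|v\|_\infty\}>0$. Since each bound in \eqref{cond1} and \eqref{cond2} constrains $f_i$ only through its own variable $u_i$ (with $u_1=u$, $u_2=v$), the two components decouple: it is enough to prove that \emph{each} of $\|u\|_\infty$ and $\|v\|_\infty$ must vanish, which contradicts $\|(u,v)\|>0$. A single per-component mechanism then covers all three alternatives of the statement, because in each of them every $f_i$ obeys one of \eqref{cond1}, \eqref{cond2}.

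First I would record the facts that make the estimates sharp. As $u\in K_1$ is nonincreasing, $\|u\|_\infty=u(0)$; and since $\max_t T_2(u,v)(t)=T_2(u,v)(\sigma_{u,v})$, we have $\|v\|_\infty=v(\sigma_{u,v})$. The cone inequalities $u(t)\ge(1-t)\|u\|_\infty$ and $v(t)\ge\min\{t,1-t\}\|v\|_\infty$ then show that whenever $\|u\|_\infty>0$ (resp. $\|v\|_\infty>0$) one has $u(\tau)>0$ for $\tau\in[0,1)$ (resp. $v(\tau)>0$ for $\tau\in(0,1)$), i.e. almost everywhere. This is precisely what allows a strict pointwise inequality on $f_i$ to pass, still strictly, through the integrals, which are positive by $(C2)$ and $(C3)$.

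Assume first that $f_i$ satisfies \eqref{cond1} and, for contradiction, that the corresponding norm is positive. From $u_i(\tau)\le\|u_i\|_\infty$ and $f_i(\tau,u,v)<\varphi_{p_i}(m_iu_i(\tau))\le\varphi_{p_i}(m_i\|u_i\|_\infty)$, together with $B_i(w)\le h_{i2}w$, I would repeat the chain of inequalities in the proof of Lemma \ref{ind1b} (for $v$ splitting into $\sigma_{u,v}\le\frac12$ and $\sigma_{u,v}>\frac12$). Evaluating at $t=0$ for $u$ and at $t=\sigma_{u,v}$ for $v$, and factoring $\varphi_{p_i}^{-1}(\varphi_{p_i}(m_i\|u_i\|_\infty))=m_i\|u_i\|_\infty$ against the constant $1/m_i$, gives $\|u_i\|_\infty<m_i\|u_i\|_\infty\cdot\frac1{m_i}=\|u_i\|_\infty$, a contradiction, so $\|u_i\|_\infty=0$. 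If instead $f_i$ satisfies \eqref{cond2}, I would use the cone bound $u_i(t)\ge c_i\|u_i\|_\infty$ on the subinterval entering the definition of $1/M_i$, so that \eqref{cond2} forces $f_i>\varphi_{p_i}((M_i/c_i)u_i(t))\ge\varphi_{p_i}(M_i\|u_i\|_\infty)$ there, and reproduce the lower estimates of Lemma \ref{idx0b1} (with its three sub-cases $\sigma_{u,v}\le a_2$, $\sigma_{u,v}\ge b_2$, $a_2<\sigma_{u,v}<b_2$ for $v$). This yields $\|u_i\|_\infty>M_i\|u_i\|_\infty\cdot\frac1{M_i}=\|u_i\|_\infty$, again a contradiction, so $\|u_i\|_\infty=0$. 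Applying the appropriate alternative to each $i\in\{1,2\}$ forces $\|u\|_\infty=\|v\|_\infty=0$, contradicting positivity.

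The bulk of the work is the bookkeeping of these two reversed estimates, which is mechanical. The one delicate point is the propagation of strictness: since \eqref{cond1} and \eqref{cond2} are hypothesized only for $u_i>0$, I must invoke the cone positivity above to know that $u_i(\tau)>0$ almost everywhere (and, for \eqref{cond2}, that $u_i(t)\ge c_i\|u_i\|_\infty>0$ on the relevant subinterval, where by the Remark following $(C3)$ the weight $g_i$ has strictly positive integral), so that the strict pointwise inequalities survive integration and produce the strict $<$ or $>$ that closes each contradiction. The case analysis on the free point $\sigma_{u,v}$ is the only other friction, and it is dispatched exactly as in Lemmas \ref{ind1b} and \ref{idx0b1}.
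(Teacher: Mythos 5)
Your proposal is correct and follows essentially the same route as the paper: a contradiction argument that, for each nonzero component norm, reruns the estimates of Lemmas \ref{ind1b} and \ref{idx0b1} (including the case analysis on $\sigma_{u,v}$) with the pointwise bounds \eqref{cond1} or \eqref{cond2} in place of the $\sup$/$\inf$ conditions, yielding $\|u_i\|_\infty<\|u_i\|_\infty$ or $\|u_i\|_\infty>\|u_i\|_\infty$. Your explicit attention to why the strict inequality survives integration (cone positivity of $u_i$ plus positivity of the weight integrals) is a point the paper passes over silently, but it does not change the argument.
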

\begin{proof}
$(1)$ Assume, on the contrary, that there exists $(u,v)\in K$ such that $(u,v)=T(u,v)$ and $(u,v)\neq (0,0)$.
We distinguish two cases.
\begin{itemize}
\item Let be $\|u\|_\infty \neq 0$.
Then we have
\begin{align*}
u(t)&= \int_{t}^{1} \varphi_{p_1}^{-1}\left(\int_{0}^{s}
g_1(\tau)f_1(\tau,u(\tau),v(\tau))d\tau\right)ds+B_1\left(\varphi_{p_1}^{-1}\left(\int_{0}^{1}g_1(\tau)
f_1(\tau,u(\tau),v(\tau))d\tau
\right)\right) \\
&< m_1  \int_{t}^{1} \varphi_{p_1}^{-1}\left(\int_{0}^{s} g_1(\tau)\varphi_{p_1}(u(\tau))d\tau\right)ds+
m_1 h_{12} \varphi_{p_1}^{-1}\left(\int_{0}^{1} g_1(\tau)\varphi_{p_1}(u(\tau))d\tau\right)\\
&\leq m_1  \|u\|_\infty \left(\int_{t}^{1}
\varphi_{p_1}^{-1}\left(\int_{0}^{s}
g_1(\tau)d\tau\right)ds+h_{12}
\varphi_{p_1}^{-1}\left(\int_{0}^{1} g_1(\tau)d\tau\right)\right).
\end{align*}
Taking  $t=0$ gives
\begin{align*}
\|u\|_\infty &= u(0)< m_1  \|u\|_\infty \left(\int_{0}^{1}
\varphi_{p_1}^{-1}\left(\int_{0}^{s}
g_1(\tau)d\tau\right)ds+h_{12}
\varphi_{p_1}^{-1}\left(\int_{0}^{1}
g_1(\tau)d\tau\right)\right)\\&= m_1  \|u\|_\infty \frac{1}{m_1},
\end{align*}
a contradiction.
\item Let be  $\|v\|_{\infty} \neq 0$.\\
 Reasoning as in Lemma \ref {ind1b} we distinguish the  cases $\sigma_{u,v} \leq 1/2$ and $\sigma_{u,v} >1/2$.\\ In the first case we have
\begin{align*}
\|v\|_{\infty} &= \|T_2(u,v)\|_\infty=T_2(u,v)(\sigma_{u,v})\\
&= \int_{0}^{\sigma_{u,v}} \varphi_{p_2}^{-1}\left(\int_{s}^{\sigma_{u,v}} g_2(\tau)f_2(\tau,u(\tau),v(\tau))d\tau\right)\,ds\\&+B_2\left(\varphi_{p_2}^{-1}\left(\int_{0}^{\sigma_{u,v}}g_2(\tau)f_2(\tau,u(\tau),v(\tau))\,d\tau\right)\right)\\
&< m_2  \|v\|_\infty  \left(\int_{0}^{\sigma_{u,v}}
\varphi_{p_2}^{-1}\left(\int_{s}^{\sigma_{u,v}}
g_2(\tau)d\tau\right)\,ds+ h_{22}
\varphi_{p_2}^{-1}\left(\int_{0}^{\sigma_{u,v}}g_2(\tau)\,d\tau\right)\right)\\
&\le m_2  \|v\|_\infty  \left(\int_{0}^{\frac{1}{2}}
\varphi_{p_2}^{-1}\left(\int_{s}^{\frac{1}{2}}
g_2(\tau)d\tau\right)\,ds+ h_{22}
\varphi_{p_2}^{-1}\left(\int_{0}^{\frac{1}{2}}g_2(\tau)\,d\tau\right)\right)\le
m_2  \|v\|_\infty\frac{1}{m_2}\,,
\end{align*}
a contradiction. \\The proof is similar in the last case
$\sigma_{u,v} >1/2$.
\end{itemize}
$(2)$ Assume, on the contrary, that there exists $(u,v)\in K$ such that $(u,v)=T(u,v)$ and$(u,v)\neq (0,0)$.
 We distinguish two cases
\begin{itemize}
\item Let be $\|u\|_\infty \neq 0$.
 Then, for $t\in [a_1,b_1]= [0,b_1]$, we have
\begin{align*}
u(t)&= \int_{t}^{1}\varphi_{p_1}^{-1}\left(\int_0^s
g_1(\tau)f_1(\tau,u(\tau),v(\tau))\,d\tau\right)\,ds
+B_1\left(\varphi_{p_1}^{-1}\left(\int_{0}^{1}g_1(\tau)
f_1(\tau,u(\tau),v(\tau))\,d\tau
\right)\right)\\
&\geq \int_{t}^{b_1}\varphi_{p_1}^{-1}\left(\int_{0}^{s}
g_1(\tau)f_1(\tau,u(\tau),v(\tau))\,d\tau\right)\,ds
+h_{11}\varphi_{p_1}^{-1}\left(\int_{0}^{1}g_1(\tau)
f_1(\tau,u(\tau),v(\tau))\,d\tau
\right)\\
&\geq \int_{t}^{b_1}\varphi_{p_1}^{-1}\left(\int_{0}^{s}
g_1(\tau)f_1(\tau,u(\tau),v(\tau))\,d\tau\right)\,ds
+h_{11}\varphi_{p_1}^{-1}\left(\int_{0}^{b_1}g_1(\tau)
f_1(\tau,u(\tau),v(\tau))\,d\tau
\right)\\
&>\frac{M_1}{c_1}\left(\int_{t}^{b_1}\varphi_{p_1}^{-1}\left(\int_{0}
^{s} g_1(\tau)\varphi_{p_1}(u(\tau))\,d\tau\right)\,ds+
h_{11}\varphi_{p_1}^{-1}\left(\int_{0}^{b_1}g_1(\tau)\varphi_{p_1}(u(\tau))d\tau\right)\right)\\
&>\frac{M_1}{c_1}\left(\int_{t}^{b_1}\varphi_{p_1}^{-1}\left(\int_{0}
^{s}
g_1(\tau)\varphi_{p_1}(c_1\|u\|_{\infty})\,d\tau\right)\,ds+h_{11}\varphi_{p_1}^{-1}\left(\int_{0}^{b_1}g_1(\tau)
\varphi_{p_1}(c_1\|u\|_{\infty})\,d\tau \right)\right)\\
&=\frac{M_1}{c_1}c_1\|u\|_{\infty}\left(\int_{t}^{b_1}\varphi_{p_1}^{-1}\left(\int_{0}
^{s} g_1(\tau)\,d\tau\right)\,ds+
h_{11}\varphi_{p_1}^{-1}\left(\int_{0}^{b_1}g_1(\tau)d\tau
\right)\right).
\end{align*}
For $t=0$ we obtain
\begin{equation*}
u(0)= \|u\|_{\infty}>M_1 \|u\|_{\infty}\frac{1}{M_{1}},
\end{equation*}
 a contradiction.
\item Let be  $\|v\|_{\infty} \neq 0$. We examine the case $\sigma_{u,v} \geq b_2$.
We have
\begin{align*}
\|v\|_{\infty}&=v(\sigma_{u,v})=T_2(u,v)(\sigma_{u,v})
= \int_{0}^{\sigma_{u,v}} \varphi_{p_2}^{-1}\left(\int_{s}^{\sigma_{u,v}} g_2(\tau)f_2(\tau,u(\tau),v(\tau))d\tau\right)\,ds\\&+B_2\left(\varphi_{p_2}^{-1}\left(\int_{0}^{\sigma_{u,v}}g_2(\tau)f_2(\tau,u(\tau),v(\tau))\,d\tau\right)\right) \\
&\geq \int_{a_2}^{b_2} \varphi_{p_2}^{-1}\left(\int_{s}^{b_2}
g_2(\tau)f_2(\tau,u(\tau),v(\tau))d\tau\right)\,ds +
h_{21}\,\varphi_{p_2}^{-1}\left(\int_{a_2}^{b_2}g_2(\tau)f_2(\tau,u(\tau),v(\tau))\,d\tau\right) \\
&> \frac{M_2}{c_2}c_2\|v\|_{\infty}\Bigg(\int_{a_2}^{b_2}
\varphi_{p_2}^{-1}\left(\int_{s}^{b_2} g_2(\tau)\,d\tau\right)ds
+h_{21}\,\varphi_{p_2}^{-1}\left(\int_{a_2}^{b_2}g_2(\tau)d\tau\right)\Bigg)
\geq M_2\|v\|_{\infty}\frac{1}{M_2},
\end{align*}
 a contradiction. By similar proofs, the cases $0<\sigma_{u,v}\leq a_2$ and $a_2<\sigma_{u,v}< b_2$ can be examined.
\end{itemize}
$(3)$ Assume, on the contrary, that there exists $(u,v)\in K$ such that $(u,v)=T(u,v)$ and $(u,v)\neq (0,0)$. If  $\|u\|_\infty \neq 0$
then the function $f_1$ satisfies either \eqref{cond1} or \eqref{cond2} and the proof follows as in the previous cases.  If  $\|v\|_\infty \neq 0$
then the function $f_2$ satisfies either \eqref{cond1} or \eqref{cond2} and the proof follows as  previous cases.
\end{proof}
\section{An example}
We illustrate in the following example that all the constants that occur in the Theorem~\ref{mult-sys} can be computed.\\
Consider the system
\begin{equation}\label{lap2}
\begin{array}{c}
(\varphi_{p_1}(u^{\prime}))'(t)+g_1(t)f_{1}(t, u(t),v(t))=0,\ t \in (0,1), \\
(\varphi_{p_2}(v^{\prime}))'(t)+g_2(t)f_{2}(t, u(t),v(t))=0,\ t \in (0,1), \\
\end{array}%
\end{equation}
subject to  boundary conditions
\begin{equation}\label{bc2}
u'(0)=
0\,,\,\,u(1)+B_1\left(u'(1)\right)=0,\,\,\,\,v(0)=B_2\left(v'(0)\right)\,,\,\,v(1)=0,
\end{equation}
where $B_1$ and $B_2$ are defined by:
$$B_1(w)=\begin{cases} w, \,\,\,\,\,\,w\le 0, \cr
\frac{w}{2}, \,\,\,\, 0\leq w \leq 1,\cr \frac{w}{6}
+\frac{1}{3}, \,\,\,\,w\geq 1,\end{cases}$$ and
$$B_2(w)=\begin{cases} \frac{w}{3},\,\,\,\,\, 0\leq w \leq 1,\cr \frac{w}{9} +\frac{2}{9},\,\,\,
 w\geq 1.\end{cases}\,$$ Now we assume $g_1 =g_2\equiv 1$.
Thus we have
$$
 \frac{1}{m_{1}}=\frac{p_1-1}{p_1} +h_{12},
 $$
$$
 \frac{1}{m_{2}}=\frac{p_2-1}{p_2}\left(\frac{1}{2}\right)^{\frac{p_2}{p_2-1}}+h_{22}\,\left(\frac{1}{2}\right)^{\frac{1}{p_2-1}},
 $$
 $$
\frac{1}{M_1}=\frac{1}{M_1[0,b_1]}=\frac{p_1-1}{p_1}\,\,b_1^{\frac{p_1}{p_1-1}}+h_{11}\,b_1^{\frac{1}{p_1-1}}
$$
and
$$
 \frac{1}{M_2}=\frac{1}{M_{2}[a_2,b_2]}
 =\frac{1}{2}\min_{a_2\le \nu\le b_2}\left(\frac{p_2-1}{p_2}\,\,\left((\nu-a_2)^{\frac{p_2}{p_2-1}}+(b_2-\nu)^{\frac{p_2}{p_2-1}}\right)
 +h_{21}(\nu-a_2)^{\frac{1}{p_2-1}}\right).
$$
The choice $p_1=\frac{3}{2}$, $p_2=3$, $b_1=\frac{2}{3}$,
$a_2=\frac{1}{4}$ , $b_2=\frac{3}{4}$, $h_{11}=1/6$, $h_{12}=1/2$,
$h_{21}=1/9$ and $h_{22}=1/3$ gives by direct computation:
$$
c_1=\frac{1}{3};\ c_2=\frac{1}{4};\  m_1=1.2;\ M_1=5.78571;\
m_2=2.12132;\ M_2=9.14497.
$$
Let us now consider
$$
f_1(t, u,v)=\frac{1}{16}(u^4+t^3v^3)+\frac{27}{50}, \quad
f_2(t,u,v)=(tu)^{\frac{1}{2}}+10v^{9}.
$$

Then, with the choice of $\rho_1=\rho_2=1/20$, $r_1=1$, $r_2=2/3$,
$s_1=s_2=9$,  we obtain
$$\inf \Bigl\{ f_1(t,u,v):\; (t,u,v)\in [0,\frac{2}{3}]\times[0,\rho_1]\times[0,\rho_2] \Bigr\}= f_1(0,0,0)\\=0.54>\sqrt{M_1\rho_1}=0.538,$$
$$\sup \Bigl\{ f_1(t,u,v):\; (t,u,v)\in [0,1]\times [0,r_1]\times[0, r_2]\Bigr\}=f_1(1,r_1,r_2)=0.62< \sqrt{m_1 r_1}=1.095,$$
$$\inf \Bigl\{ f_1(t,u,v):\; (t,u,v)\in
[0,2/3]\times[c_1s_1,s_1]\times[0,s_2]\Bigr\}=f_1(0,c_1s_1,0)=5.602
>\sqrt{M_1 s_1}=1.247,$$
$$\sup \Bigl\{ f_2(t,u,v):\; (t,u,v)\in [0,1]\times [0,r_1]\times[0, r_2]\Bigr\}=f_2(1,r_1,r_2)=1.260<(m_2 r_2)^2=2,$$
$$\inf \Bigl\{ f_2(t,u,v):\; (t,u,v)\in
[\frac{1}{4},\frac{3}{4}]\times[0,s_1]\times[c_2s_2,s_2]\Bigr\}=f_2(t,0,c_2s_2)=14778.9
>(M_2 s_2)^2=6774.07.$$

Thus the conditions
$(\mathrm{I}^{0}_{\frac{1}{20},\frac{1}{20}})^{\star}$,
$(\mathrm{I}^{1}_{1,2/3})$ and $(\mathrm{I}^{0}_{9,9})$ are
satisfied; therefore the system (\ref{lap2})-(\ref{bc2}) has at
least two nontrivial solutions $(u_1,v_1)$ and $(u_2,v_2)$ such that $1/20<\|(u_1,v_1)\|\leq 1$ and $1<\|(u_2,v_2)\|\leq 9$.

\end{document}